\def\cequiv{\raisebox{-1.5mm}{$\;\stackrel{\raisebox{-3.9mm}{=}}{{\sim}}\;$}}
\def\r{\mathbf{r}}
\def\s{\mathbf{s}}
\def\dx{\mathrm{dx}}
\def\dS{\,\mathrm{ds}\,}
\def\uu{\undertilde{u}}
\def\uw{\undertilde{w}}
\def\uv{\undertilde{v}}
\def\uV{\undertilde{V}}
\def\uH{\undertilde{H}}
\def\uf{\undertilde{f}}
\def\curl{{\rm curl}}
\def\dv{{\rm div}}
\def\ruvps{\mathring{\uV}{}^{\rm PS}}
\def\vwmh{V^{\rm RMW}_{h0}}
\newtheorem{theorem}{Theorem}
\newtheorem{remark}[theorem]{Remark}
\newtheorem{lemma}[theorem]{Lemma}
\newcounter{mnote}
\let\oldmarginpar\marginpar
\renewcommand\marginpar[1]{\-\oldmarginpar[\raggedleft\footnotesize #1]%
  {\raggedright\footnotesize #1}}
\begin{document}

\title[Minimal consistent element for biharmonic equation]{Minimal consistent finite element space for the biharmonic equation on quadrilateral grids}

\author{Shuo Zhang}
\thanks{The author is supported partially by the National Natural Science Foundation of China with Grant No. 11471026 and National Centre for Mathematics and Interdisciplinary Sciences, Chinese Academy of Sciences.}
\address{LSEC, Institute of Computational Mathematics and Scientific/Engineering Computing, Academy of Mathematics and System Sciences, Chinese Academy of Sciences, Beijing 100190, People's Republic of China}
\email{szhang@lsec.cc.ac.cn}

\subjclass[2000]{Primary 65L60, 65M60, 65N30, 31A30}

\keywords{nonconforming finite element, quadrilateral grid, quadratic polynomial, biharmonic equation}

\begin{abstract}
In this paper, a finite element space is presented on quadrilateral grids which can provide consistent discretization for the biharmonic equations.  The space consists of piecewise quadratic polynomials and is of minimal degree for the variational problem. 
\end{abstract}

\maketitle

\section{introduction}
\label{sec:intr}

In the study of qualitative and numerical analysis of partial differential equations and, in general, of approximation theory, we are often interested in the approximation of functions in Sobolev spaces by piecewise polynomials defined on a partition of the domain. In order for simpler interior structure, lower degree polynomials are often expected to be used. It is of theoretical and practical interests whether and how a minimal-degree finite element space can be constructed for certain problems, namely, particularly, whether and how a consistent finite element space can be constructed for $H^m$ elliptic problem with $m$-th degree polynomials.  

When the subdivision consists of simplexes, a systematic family minimal-degree family of nonconforming finite elements has been proposed by Wang and Xu \cite{Wang.M;Xu.J2013} for $2m$-th order elliptic partial differential equations in $R^n$ for any $n\geqslant m$ with polynomials with degree $m$. Known as Wang-Xu or Morley-Wang-Xu family, the elements have been playing bigger and bigger role in numerical analysis. The elements are constructed based on the perfect matching between the dimension of $m$-th degree polynomials and the dimension of $(n-k)$-subsimplexes with $1\leqslant k\leqslant m$. The generalization to the cases $n<m$ is attracting more and more research interests, c.f., e.g., \cite{Wu.S;Xu.J2017}. 

When the subdivision consists of geometrical shapes other than simplex, the problem is more complicated. The minimal conforming element spaces have been constructed for $H^m$ problem on $\mathbb{R}^n$ rectangle grids by Hu and Zhang \cite{Hu.J;Zhang.Sy2015}, where $Q_k$ polynomials are used for $2k$-th order problems. Some low-degree rectangle elements have been designed, such as the rectangular Morley element and incomplete $P_3$ element for biharmonic equation. It remains open if the degrees can be further reduced for consistent nonconforming element spaces with minimal degree. Generally, the cells of shapes as simple as rectangle can share interfaces with more neighbour cells, and more continuity restrictions will need a higher degree of polynomials, which is generally higher than the order of the underlying Sobolev space. This way, it seems extremely difficult, if not impossible, to construct consistent finite elements in the formulation of Ciarlet's triple (c.f. \cite{Ciarlet.P1978,Wang.M;Shi.Z2013mono}) with $m$-th degree for $H^m$ problems on even rectangular grids.  When approaches are not restricted on Ciarlet's triple, consistent finite element space with linear polynomials can be constructed for Poisson equation on quadrilateral grids via a more global approach\cite{Park.C;Sheen.D2003,Hu.J;Shi.Z2005}; for higher-order elliptic equations, though, the minimal-degree construction of finite elements on even rectangular grids remains open, and let alone general quadrilateral subdivisions.

In this paper, we study the minimal-degree finite element construction for biharmonic equation on quadrilateral grids. We will present a finite element space that consists of piecewise quadratic polynomials on quadrilateral grids, and which can provide consistent discretization for the biharmonis equation. 

The finite element functions on quadrilateral cells are not constructed by the usual bilinear mapping approach. Actually, as pointed in \cite{Arnold.D;Boffi.D;Falk.R;Gastaldi.L2001,Arnold.D;Boffi.D;Falk.R2002}, 
for quadrilateral finite elements that are constructed by starting from a given finite dimensional space of polynomials on the unit reference square, a necessary and sufficient condition for certain approximation accuracy is that the space on reference cell contains the space of $Q_r$ type, namely of all polynomial functions of degree r separately in each variable. This will stops us from constructing a consistent finite element space with polynomial space of $P_r$ type. The way we construct finite element space is similar to the way discussed in \cite{Park.C;Sheen.D2003,Hu.J;Shi.Z2005,Park.C;Sheen.D2013}, where local polynomial function spaces are used on every cell directly. The space constructed in the present paper can be viewed as a reduced quadrilateral Morley element space. This observation admits us to present the consistency estimation in an easy way. Similar to elements in, e.g., \cite{Fortin.M;Soulie.M1983} and \cite{Park.C;Sheen.D2003}, and many spline type methodss, the continuity restriction of the finite element function is more than determining a local polynomial.  and the standard scaling argument could not be used directly. We instead figure out the exact connection between the finite element functions designed in this paper and the element in \cite{Park.C;Sheen.D2003}, and construct the approximation estimate in an indirect way. The exact connection can be viewed as a specialization of the one given in \cite{Zhang.S2016nm}. Moreover, this relation indicates an analogue correspondence with the exactness relation between the Crouzeix-Raviart element and the Morley element on triangular grids.

The remaining of the paper is organized as follows. In Section \ref{sec:pre}, some preliminaries are collected. In Section \ref{sec:quadele}, two finite elements are defined on quadrilateral grids. They are each the quadrilater analogue of the rotated $Q_1$ element and the Wilson element on rectangle grids, and they are useful auxiliary element in this paper. In Sections \ref{sec:min} and \ref{sec:impl}, a minimal-degree finite element scheme for biharmonic equation is designed, and its convergence analysis and implementation are presented. Finally, in Section \ref{sec:conc}, some concluding remarks are given.

\section{Preliminaries}
\label{sec:pre}

In this paper, we use these notation. Let $\Omega\subset\mathbb{R}^2$ be a simply-connected Lipschitz domain with a boundary of piecewise segment, and $\Gamma=\partial\Omega$ be the boundary, with $\mathbf{n}$ the outward unit normal vector. Denote by $H^1(\Omega)$, $H^1_0(\Omega)$, $H^2(\Omega)$, and $H^2_0(\Omega)$ the standard Sobolev spaces as usual, and $L^2_0(\Omega):=\{w\in L^2(\Omega):\int_\Omega w\dx=0\}$.  Denote $\undertilde{H}{}^1_0(\Omega):=(H^1_0(\Omega))^2$, $\undertilde{L}^2(\Omega)=(L^2(\Omega))^2$; we use $``\undertilde{\cdot}"$ for vector valued quantities in the present paper. We use usual symbols, and use the subscript $\cdot_h$ for a cell by cell operation when a grid is involved. Denote $\mathring\uH{}^1_0(\Omega):=\{\uw\in \uH{}^1_0(\Omega):\dv\uw=0\}$.

\subsection{Geometry of convex quadrilateral grid}

Let $Q$ be a convex quadrilateral with $a_i$ the vertices and $e_i$ the edges, $i=1:4$. See Figure \ref{fig:convquad} for an illustration. Let $m_i$ be the mid-point of $e_i$, then the quadrilateral $\square m_1m_2m_3m_4$ is a parallelogram(\cite{Park.C;Sheen.D2003}). The cross point of $m_1m_3$ and $m_2m_4$, which is labelled as $O$, is the midpoint of both $m_1m_3$ and $m_2m_4$. Denote $\mathbf{r}=\overrightarrow{Om_3}$ and $\mathbf{s}=\overrightarrow{Om_4}$. Then the coordinates of the vertices in the coordinate system $\r O\s$ are $a_1(-1-\alpha,1-\beta)$, $a_2(-1+\alpha,-1+\beta)$, $a_3(1-\alpha,-1-\beta)$ and $a_4(1+\alpha,1+\beta)$ for some $\alpha,\beta$. 
Since $Q$ is convex, $|\alpha|+|\beta|<1$. Without loss of generality, we assume $\alpha>0$, $\beta>0$ and $\r\times\s>0$.

Define the shape regularity indicator of the cell $Q$ by $\mathcal{R}_Q:=\max\{\frac{|\mathbf{r}||\mathbf{s}|}{\mathbf{r}\times\mathbf{s}},\frac{|\mathbf{r}|}{|\mathbf{s}|},\frac{|\mathbf{s}|}{|\mathbf{r}|}\}$. Evidently $\mathcal{R}_Q\geqslant 1$, and $\mathcal{R}_Q=1$ if and only if $Q$ is a square. A given family of quadrilateral triangulations $\{\mathcal{G}_h\}$ of $\Omega$ is said to be regular, if all the shape regularity indicators of the cells of all the triangulations are uniformly bounded.

\begin{figure}[htbp]
\begin{tikzpicture}
\draw (0.05,0.26)node{$O$};

\draw[dashed](-2,2)--(3,2);
\draw[dashed](-3.5,-2)--(1.5,-2);
\draw[dashed](-3.5,-2)--(-2,2);
\draw[dashed](1.5,-2)--(3,2);

\draw[dashed](0.5,2)--(-1,-2);
\draw[dashed](2.25,0)--(-2.75,0);

\draw(-2.345,0.275)node{\Large$m_1$};
\draw(-0.75,-1.9)node{\Large$m_2$};
\draw(2.1,0.35)node{\Large$m_3$};
\draw(0.85,1.75)node{\Large$m_4$};

\draw(-3.4,0.175)node{\Large$e_1$};
\draw(-0.95,-2.4)node{\Large$e_2$};
\draw(2.7,0.05)node{\Large$e_3$};
\draw(0.4,2.35)node{\Large$e_4$};

\draw[->,line width=3pt,-latex](-0.25,0)-- node[auto] {$\r$} (2.25,0);
\draw[->,line width=3pt,-latex](-0.25,0)-- node[auto] {$\s$} (0.5,2);

\draw(3.725,2.6)--(-2.725,1.4);
\draw(3.725,2.6)--(0.775,-2.6);
\draw(-2.775,-1.4)--(0.775,-2.6);
\draw(-2.775,-1.4)-- (-2.725,1.4);

\draw(-2.9,1.7)node{\Large$a_1$};
\draw(-2.8,-1.7)node{\Large$a_2$};
\draw(0.9,-2.85)node{\Large$a_3$};
\draw(3.85,2.8)node{\Large$a_4$};

\draw[dashed](0.5,2)--(2.25,0);
\draw[dashed](2.25,0)--(-1,-2);
\draw[dashed](-1,-2)--(-2.75,0);
\draw[dashed](-2.75,0)--(0.5,2);

\end{tikzpicture}

\caption{Illustration of a convex quadrilateral $Q$.}\label{fig:convquad}
\end{figure}
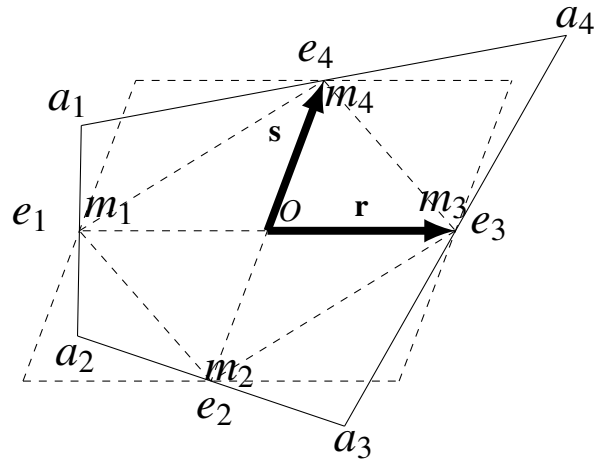

Define two linear functions $\xi$ and $\eta$ by $\xi(a\r+b\s)=a\ \ \mbox{and}\ \ \eta(a\r+b\s)=b.$ The two functions play the same role on quadrilateral as that of barycentric coordinate on triangles. 

Technically, we construct a table (Table \ref{tab:tecasis} below, c.f. \cite{Zhang.S2016nm}) about the evaluation of some functions firstly. Here and after, while ``$\int$" is the usual integral symbol, we use ``$\fint$" for the spatial average on the integral domain. This table will be useful in both theoretical analysis and practical programming. 
\begin{table}[htbp]
\begin{tabular}{c|cccccccccc}
 function($u$) & 1 & $\eta$ & $\xi$ & $\eta^2$ & $\xi^2$ &$\eta^2-\xi^2$
 \\
\hline $\displaystyle\fint_{e_1}u\dS$ & 1 & 0 & -1 & $\displaystyle \frac{(1-\beta)^2}{3}$ & $\displaystyle 1+\frac{\alpha^2}{3}$&$ \displaystyle 1+\frac{\alpha^2}{3} -\frac{(1-\beta)^2}{3}$
\\
\hline $\displaystyle\fint_{e_2}u\dS$ & 1 & -1 & 0 & $\displaystyle 1+\frac{\beta^2}{3}$ & $\displaystyle \frac{(1-\alpha)^2}{3}$ & $\displaystyle \frac{(1-\alpha)^2}{3}- 1-\frac{\beta^2}{3}$
\\
\hline $\displaystyle\fint_{e_3}u\dS$ & 1 & 0 & 1 & $\displaystyle \frac{(1+\beta)^2}{3}$ & $\displaystyle 1+\frac{\alpha^2}{3}$ & $\displaystyle 1+\frac{\alpha^2}{3}-\frac{(1+\beta)^2}{3}$
\\
\hline $\displaystyle\fint_{e_4}u\dS$ & 1 & 1 & 0 & $\displaystyle 1+\frac{\beta^2}{3}$ & $\displaystyle \frac{(1+\alpha)^2}{3}$ & $\displaystyle \frac{(1+\alpha)^2}{3}-1-\frac{\beta^2}{3}$
\\
\hline
\end{tabular}
\caption{Boundary average of some functions.}\label{tab:tecasis}
\end{table}

\subsection{Subdivisions and finite element spaces}

Let $\mathcal{G}_h$ be a regular subdivision of domain $\Omega$, with the cells being convex quadrilaterals; i.e., $\displaystyle\Omega=\cup_{Q\in\mathcal{G}_h}Q$. Let $\mathcal{N}_h$ denote the set of all the vertices, $\mathcal{N}_h=\mathcal{N}_h^i\cup\mathcal{N}_h^b$, with $\mathcal{N}_h^i$ and $\mathcal{N}_h^b$ consisting of the interior vertices and the boundary vertices, respectively. Similarly, let $\mathcal{E}_h=\mathcal{E}_h^i\bigcup\mathcal{E}_h^b$ denote the set of all the edges, with $\mathcal{E}_h^i$ and $\mathcal{E}_h^b$ consisting of the interior edges and the boundary edges, respectively. For an edge $e$, $\mathbf{n}_e$ is a unit vector normal to $e$, and $\tau_e$ is a unit tangential vector of $e$ such that $\mathbf{n}_e\times\tau_e>0$. On the edge $e$, we use $\llbracket\cdot\rrbracket_e$ for the jump across $e$. 

Denote by $\mathfrak{F}$ the number of cells of the triangulation; denote by $\mathfrak{X}$, $\mathfrak{X}_I$, $\mathfrak{X}_B$ and $\mathfrak{X}_C$ the number of vertices, internal vertices, boundary vertices, and corner vertices, respectively; and denote by $\mathfrak{E}$, $\mathfrak{E}_I$ and $\mathfrak{E}_B$ the number of edges, internal edges, and boundary edges, respectively. Euler's formula states that $\mathfrak{F}+\mathfrak{X}=\mathfrak{E}+1$.

\subsubsection{Quadrilateral Morley element space}
The quadrilateral Morley element (\cite{Park.C;Sheen.D2013}) is defined by $(Q,P_Q^{\rm QM},D_Q^{\rm QM})$ with
\begin{enumerate}
\item $Q$ is a convex quadrilateral;
\item $P_Q^{\rm QM}=P_2(Q)+{\rm span}\{\xi^3,\eta^3\}$;
\item the components of $D_Q^{\rm QM}=\{d_i^{\rm QM},d_{i+4}^{\rm QM}\}_{i=1:4}$ for any $v\in H^2(Q)$ are: 
$$
d^{\rm QM}_i(v)=v(a_i),\ a_i\ \mbox{the\ vertices\ of}\ T;\ \ d^{\rm QM}_{i+4}(v)=\fint_{e_i}\partial_{\mathbf{n}_{e_i}}v\dS,\ e_i\ \mbox{the\ edges\ of}\ T.
$$
\end{enumerate}

Given a quadrilateral grid $\mathcal{G}_h$ of $\Omega$, define the quadrilateral Morley element space $V^{\rm QM}_h$ as
\begin{multline*}
\qquad V^{\rm QM}_h:=\{w_h\in L^2(\Omega):w_h|_Q\in P_Q^{\rm QM},\ \forall\,Q\in \mathcal{G}_h,\ w_h(a)\ \mbox{is\ continuous\ at}\ a\in\mathcal{N}_h,\  \\ 
\fint_e\partial_{\mathbf{n}_e}w_h\dS\ \mbox{is\ continuous\ across}\ e\in\mathcal{E}_h^i\}.\qquad
\end{multline*}
And, associated with $H^2_0(\Omega)$, define 
$$
V^{\rm QM}_{h0}:=\{w_h\in V^{\rm QM}_h:w_h(a)\ \mbox{vanishes\ at}\ a\in\mathcal{N}_h^b,\ \fint_e\partial_{\mathbf{n}_e}w_h\dS\ \mbox{vanishes\ at}\ e\in\mathcal{E}_h^b\}.
$$

\subsubsection{Quadrilateral Lin-Tobiska-Zhou (QLTZ) element space}

The QLTZ element (\cite{Lin.Q;Tobiska.L;Zhou.A2005,Zhang.S2016nm}) is defined by $(Q,P_Q^{\rm QLTZ},D_Q^{\rm QLTZ})$ with
\begin{enumerate}
\item $Q$ is a convex quadrilateral;
\item $P_Q^{\rm QLTZ}=P_1(Q)+{\rm span}\{\xi^2,\eta^2\}$;
\item the components of $D_Q^{\rm QLTZ}=\{d_0^{\rm QLTZ}\}_{i=0:4}$ for any $v\in H^1(Q)$ are: 
$$
d_0^{\rm QLTZ}(v)=\fint_Qv\dx,\ \ \mbox{and}\ \ d_i^{\rm QLTZ}(v)=\fint_{e_i}v\dS,\ e_i\ \mbox{the\ edges\ of}\ T,\ i=1:4.
$$
\end{enumerate}

Associated with $H^1(\Omega)$, define a finite element space $V_h^{\rm QLTZ}$ by
$$
V_h^{\rm QLTZ}:=\{w\in L^2(\Omega):w|_Q\in P_Q^{\rm QLTZ},\ \forall\,Q\in \mathcal{G}_h,\ \fint_e w\dS\ \mbox{is\ continuous\ at}\ e\in\mathcal{E}_h^i\},
$$
and associated with $H^1_0(\Omega)$, define a finite element space $V_{h0}$ by
$$
V_{h0}^{\rm QLTZ}:=\{w_h\in V_h^{\rm QLTZ}:\fint_ew_h\dS=0\ \mbox{at\ }e\in \mathcal{E}_h^b\}.
$$

\begin{lemma}\label{lem:esqmltz}
(Lemma 8 of \cite{Zhang.S2016nm})
$\curl_h V^{\rm QM}_{h0}=\{\uw{}_h\in \uV{}^{\rm QLTZ}_{h0}:\dv_h\uw{}_h=0\}:=\mathring{\uV}{}^{\rm QLTZ}_{h0}$. 
\end{lemma}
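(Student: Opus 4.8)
The plan is to establish the identity $\curl_h V^{\rm QM}_{h0}=\mathring{\uV}{}^{\rm QLTZ}_{h0}$ by a double inclusion, exploiting the explicit local polynomial descriptions of the two spaces together with the boundary-average data collected in Table \ref{tab:tecasis}. First I would fix a single cell $Q$ and compute $\curl$ of the local shape space $P_Q^{\rm QM}=P_2(Q)+{\rm span}\{\xi^3,\eta^3\}$. Writing $\curl w=(\partial_{\s}w,-\partial_{\r}w)$ in the $\r O\s$ coordinates (so that $\curl$ sends the scalar potential to a divergence-free vector field automatically), I would check that $\curl P_Q^{\rm QM}$ lands inside $\utilde{P}_Q^{\rm QLTZ}:=(P_1(Q)+{\rm span}\{\xi^2,\eta^2\})^2$: differentiating $\xi^3,\eta^3$ produces $\xi^2,\eta^2$ terms, and differentiating $P_2$ produces $P_1$ terms, so the local curl image is contained in the local QLTZ space, and it is manifestly divergence-free since $\dv\curl=0$. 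A dimension count on the kernel (constants are killed by $\curl$) shows that $\curl$ maps $P_Q^{\rm QM}$ onto exactly the divergence-free part of $\utilde{P}_Q^{\rm QLTZ}$ cell by cell, which is the pointwise/local version of the claimed identity.

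Next I would translate the \emph{global} continuity and boundary conditions through $\curl_h$. The Morley space $V^{\rm QM}_{h0}$ imposes continuity of vertex values and of edge-averaged normal derivatives $\fint_e\partial_{\mathbf{n}_e}w_h\dS$, while $\mathring{\uV}{}^{\rm QLTZ}_{h0}$ imposes continuity of edge-averaged values $\fint_e\utilde{w}_h\dS$ together with the cellwise divergence-free condition. The key observation I would make precise is that $\curl$ rotates derivatives: the normal derivative $\partial_{\mathbf{n}_e}w$ equals (up to sign) the tangential component of $\curl w$ along $e$, and the tangential derivative $\partial_{\tau_e}w$ corresponds to the negative normal component of $\curl w$. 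Hence continuity of $\fint_e\partial_{\mathbf{n}_e}w_h\dS$ across an interior edge is exactly continuity of the tangential edge-average of $\curl_h w_h$. The continuity of vertex values of $w_h$, via the fundamental theorem of calculus along each edge, forces $\fint_e\partial_{\tau_e}w_h\dS=\frac{1}{|e|}\llbracket w_h\rrbracket$ to match across shared edges, i.e.\ it controls the normal edge-average of $\curl_h w_h$; together these give continuity of the full vector $\fint_e\curl_h w_h\dS$ across interior edges, which is precisely the QLTZ interface condition. The homogeneous boundary data transfer in the same way: vanishing vertex values and vanishing $\fint_e\partial_{\mathbf{n}_e}w_h$ on boundary edges yield the vanishing of $\fint_e\curl_h w_h\dS$ on $\mathcal{E}_h^b$.

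To close the argument I would verify surjectivity, i.e.\ that every $\utilde{w}_h\in\mathring{\uV}{}^{\rm QLTZ}_{h0}$ arises as $\curl_h$ of some $w_h\in V^{\rm QM}_{h0}$. Here I would use the divergence-free and boundary conditions to construct a global scalar potential: on each cell a local potential exists by the surjectivity established in the first step, and the matching edge-average conditions on $\utilde{w}_h$ guarantee that these local potentials can be reconciled into a single-valued Morley function, with the homogeneous boundary data ensuring membership in the ``$0$'' subspace. Since $\Omega$ is simply connected, the absence of topological obstructions lets the discrete de Rham–type argument go through; an Euler-formula dimension count (using $\mathfrak{F}+\mathfrak{X}=\mathfrak{E}+1$) can be invoked to confirm that the dimensions of the two sides agree, which upgrades the inclusion to equality. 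The main obstacle I anticipate is the global potential-reconstruction step: making rigorous that the cellwise potentials glue consistently requires carefully tracking that the vertex-continuity of the Morley function is compatible with the edge-average-continuity of the QLTZ field, and it is precisely at the vertices (the corners $\mathfrak{X}_C$) and around interior vertices that one must check no monodromy appears; I would lean on the explicit evaluations in Table \ref{tab:tecasis} to verify that the relevant local moments are nondegenerate so that the reconstruction is well defined and unique up to the constants that $\curl$ annihilates.
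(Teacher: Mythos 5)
The paper offers no proof of this lemma at all --- it is imported verbatim as Lemma 8 of \cite{Zhang.S2016nm} --- so there is no internal argument to measure yours against; judged on its own, your reconstruction is essentially correct and is the natural discrete-exactness argument. The local step checks out: since $\xi,\eta$ are affine, $\curl P_Q^{\rm QM}\subset\big(P_1(Q)+{\rm span}\{\xi^2,\eta^2\}\big)^2$; the kernel of $\curl$ on $P_Q^{\rm QM}$ is exactly the constants, so the image has dimension $7$, while $\dv$ maps the ten-dimensional vector QLTZ shape space onto $P_1(Q)$ (rank $3$), so its kernel also has dimension $7$ and the two local spaces coincide. The transfer of interface conditions is also right: $\curl w\cdot\tau_e=\pm\partial_{\mathbf{n}_e}w$ and $\curl w\cdot\mathbf{n}_e=\mp\partial_{\tau_e}w$, and $\fint_e\partial_{\tau_e}w\dS=(w(b)-w(a))/|e|$ converts vertex continuity into continuity of the normal edge average, so both components of $\fint_e\curl_h w_h\dS$ are continuous. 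The one place you should be more explicit is the gluing in the surjectivity step: continuity of the normal edge average of $\uw{}_h$ says the two local potentials on cells sharing $e$ differ by a constant that, once matched at one endpoint of $e$, automatically matches at the other; consistency of this propagation around each interior vertex is a closed chain of equalities, and simple connectivity reduces every loop in the dual graph to such vertex loops, so no monodromy appears. The boundary condition $\fint_e\uw{}_h\dS=0$ on $\mathcal{E}_h^b$ then forces the glued potential to be a single constant on the connected boundary, which you subtract to land in $V^{\rm QM}_{h0}$; alternatively your Euler-formula dimension count closes the argument nonconstructively. Two cosmetic remarks: writing $\curl w=(\partial_{\s}w,-\partial_{\r}w)$ in the skew coordinates is not literally the curl, though nothing in the argument depends on this, and Table \ref{tab:tecasis} plays no role here --- the nondegeneracy you invoke at the end is already contained in the local dimension count.
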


\subsubsection{Park-Sheen element space}
The Park-Sheen element space (\cite{Park.C;Sheen.D2003}) is a piecewise linear nonconforming finite element space for $H^1$ problem. It is defined as
$$
V_h^{\rm PS}:=\{w\in L^2(\Omega):w|_Q\in P_1(Q),\ \forall\,Q\in \mathcal{G}_h,\ \fint_e w\dS\ \mbox{is\ continuous\ at}\ e\in\mathcal{E}_h^i\},
$$
and, associated with $H^1_0(\Omega)$, as
$$
V_{h0}^{\rm PS}:=\{w_h\in V_h^{\rm PS}:\fint_ew_h\dS=0\ \mbox{at\ }e\in \mathcal{E}_h^b\}.
$$
Similarly, denote $\mathring{\uV}{}^{\rm PS}_{h0}:=\{\uv{}_h\in\uV{}^{\rm PS}_{h0}:\dv_h\uv{}_h=0\}$.

\begin{lemma}(Theorem 4.2 of \cite{Altmann.R;Carstensen.C2012}, also \cite{Park.C;Sheen.D2003,Hu.J;Shi.Z2005})
There is a constant $C$ depending on the regularity of $\mathcal{G}_h$ only, such that given $w\in H^2(\Omega)$, 
$$
\inf_{v_h\in V^{\rm PS}_h}(\|w-v_h\|_{0,\Omega}^2+\sum_{Q\in\mathcal{G}_h}h_Q^2\|\nabla (w-v_h)\|_{0,Q}^2)
\leqslant C\sum_{Q\in\mathcal{G}_h}h_Q^4\|w\|_{2,Q}^2.
$$
\end{lemma}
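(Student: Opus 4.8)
The plan is to construct a global quasi-interpolant $I_hw\in V^{\rm PS}_h$ and to bound $w-I_hw$ cell by cell. The first thing I would record is the structural obstruction that makes this harder than the triangular Crouzeix--Raviart case. Since the average of an affine function over an edge equals its value at the edge midpoint, membership in $V^{\rm PS}_h$ is the same as piecewise affinity together with continuity at the edge midpoints $m_i$. But the four midpoints of a convex quadrilateral form the parallelogram $\square m_1m_2m_3m_4$ with center $O$ (Figure~\ref{fig:convquad}), so $m_1+m_3=m_2+m_4=2O$, and hence the four edge-average functionals on $P_1(Q)$ satisfy the single relation $\fint_{e_1}p\dS+\fint_{e_3}p\dS=\fint_{e_2}p\dS+\fint_{e_4}p\dS$ for every $p\in P_1(Q)$. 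Only three of them are independent, so, unlike on triangles, one cannot prescribe all four edge averages of a local affine function and there is no purely local operator reproducing the edge averages of $w$; the construction must be global. Because the cells are not affine images of one reference square, I would, as anticipated in the introduction, perform all local estimates directly on the physical cells, with the shape-regularity indicator $\mathcal R_Q$ entering the scaling constants instead of a reference-cell map.

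Next I would quantify the consistency defect $\ell_Q(w):=\fint_{e_1}w\dS+\fint_{e_3}w\dS-\fint_{e_2}w\dS-\fint_{e_4}w\dS$. By the relation above $\ell_Q$ annihilates $P_1(Q)$, so a Bramble--Hilbert argument on $Q$ gives $|\ell_Q(w)|\le C\,h_Q|w|_{2,Q}$ with $C=C(\mathcal R_Q)$ (the evaluations being made explicit by Table~\ref{tab:tecasis}). The naive edge data $\{\fint_ew\dS\}_{e\in\mathcal E_h}$ is single-valued, hence continuous across interior edges, but it violates the per-cell relations exactly by $\{\ell_Q(w)\}_Q$, so it is not the edge-average data of any element of $V^{\rm PS}_h$. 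I would therefore correct the data, producing $\{g_e\}_{e\in\mathcal E_h}$ that satisfies every per-cell relation exactly, stays single-valued, and for which the perturbation $\delta_e:=g_e-\fint_ew\dS$ is controlled by the nearby defects, $\max_i|\delta_{e_i(Q)}|\le C\sum_{Q'\sim Q}|\ell_{Q'}(w)|$. The unique piecewise-affine $v_h=I_hw$ with $\fint_ev_h\dS=g_e$ is then a genuine member of $V^{\rm PS}_h$. A cellwise estimate splits $w-v_h$ into a local best-approximation part, bounded by $Ch_Q^2|w|_{2,Q}$ in the combined norm via Bramble--Hilbert, and a corrector part, for which the $P_1$ bounds $\|\delta v_h\|_{0,Q}\le Ch_Q\max_i|\delta_{e_i}|$ and $\|\nabla\delta v_h\|_{0,Q}\le C\max_i|\delta_{e_i}|$ give a contribution $\le Ch_Q^2\max_i|\delta_{e_i}|^2\le Ch_Q^4|w|_{2,Q}^2$; summing over $Q$ yields the asserted estimate (with room to spare, since $|w|_{2,Q}\le\|w\|_{2,Q}$).

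The main obstacle is exactly this global correction: showing that the per-cell defects $\{\ell_Q(w)\}_Q$ can be absorbed by an edge-data perturbation that is \emph{local} and whose size is controlled \emph{uniformly} in $h$ and in the mesh. This is a discrete surjectivity/stability statement for the map $\{\delta_e\}\mapsto\{\ell_Q(\delta)\}_Q$ from edge perturbations to per-cell defects: one must exhibit a right inverse bounded with a constant depending only on the regularity of $\{\mathcal G_h\}$. Here I would use the connectedness of $\Omega$ and Euler's relation $\mathfrak F+\mathfrak X=\mathfrak E+1$ to settle solvability (checking the compatibility condition, which the defects $\ell_Q(w)$ inherit from the corresponding continuous identity), and shape-regularity to bound the corrector cellwise. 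The delicacy is that the obvious remedy of realizing the correction as a single discrete flux/potential problem on the mesh risks introducing nonlocal pollution and a loss of powers of $h$; the real content is to produce a corrector of genuinely bounded support per cell whose controlled overlaps still give a mesh-uniform bound. This nonlocal character of the continuity constraint, flagged in the introduction, is precisely what forces the right-hand side to carry the full norm $\|w\|_{2,Q}$ rather than only the seminorm. An alternative I would keep in reserve is to bypass the correction by relating $V^{\rm PS}_h$ to an already-analyzed space through the exactness-type correspondences used elsewhere in the paper, but for this lemma the direct construction above is the more transparent route.
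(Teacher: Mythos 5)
First, note that the paper does not prove this lemma at all: it is quoted from Theorem 4.2 of \cite{Altmann.R;Carstensen.C2012} (see also \cite{Park.C;Sheen.D2003,Hu.J;Shi.Z2005}), so the only comparison available is with the proofs in those references. Your local ingredients are sound: the relation $\fint_{e_1}p+\fint_{e_3}p=\fint_{e_2}p+\fint_{e_4}p$ for $p\in P_1(Q)$ (a consequence of $m_1+m_3=m_2+m_4=2O$), the bound $|\ell_Q(w)|\leqslant Ch_Q|w|_{2,Q}$ by Bramble--Hilbert, and the cellwise arithmetic for the corrector are all correct. The genuine gap is the step you yourself flag as ``the real content'': you need a right inverse of the defect map $\{\delta_e\}\mapsto\{\ell_Q(\delta)\}_Q$ that is simultaneously \emph{local} (each $\delta_e$ controlled by defects of nearby cells only) and \emph{uniformly bounded} in $h$, and you offer no construction. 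Euler's formula and connectedness give solvability of the underdetermined linear system, but say nothing about locality or uniform bounds; the obvious realizations (e.g.\ setting the corrections on one family of edges to zero and telescoping along mesh lines) accumulate $O(h_Q^{-1})$ defects of size $O(h_Q|w|_{2,Q})$ each and lose half a power of $h$, exactly the pollution you warn against. Distributing $d_Q/4$ over the four edges of $Q$ perturbs the neighbours' defects at the same order, so that does not terminate either. As written, the proof is therefore incomplete at its central step.

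The gap is avoidable, and the cited proofs avoid it, by never correcting edge data at all. For each vertex $z\in\mathcal{N}_h$ define $\phi_z$ by $\fint_e\phi_z\dS=\tfrac12$ if $z\in\bar e$ and $0$ otherwise; on every cell $Q$ the two edges of $Q$ meeting $z$ are \emph{adjacent}, hence one lies in $\{e_1,e_3\}$ and the other in $\{e_2,e_4\}$, so the compatibility relation holds automatically and $\phi_z\in V^{\rm PS}_h$ with support in the vertex patch of $z$. The operator $I_hw:=\sum_{z\in\mathcal{N}_h}w(z)\phi_z$ (well defined since $H^2(\Omega)\hookrightarrow C(\bar\Omega)$ in two dimensions) satisfies $\fint_{e}I_hw\dS=\tfrac12(w(L_e)+w(R_e))$, so for $p\in P_1(Q)$ the interpolant agrees with $p$ at all four midpoints and hence $I_hp=p$ on $Q$; local $P_1$-reproduction plus local boundedness and the standard Bramble--Hilbert scaling then give the stated estimate directly, cell by cell, with a constant depending only on $\mathcal{R}_Q$. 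Finally, your closing claim that the nonlocality of the constraint ``forces'' the full norm $\|w\|_{2,Q}$ on the right-hand side is spurious: the argument above yields the seminorm $|w|_{2,Q}$, and the full norm in the statement is simply a weaker (more generous) form of the same bound.
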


\subsection{Piecewise linear element space on criss-cross grids}

Given $\mathcal{T}_h$ a triangulation of $\Omega$, denote by $V^{\rm le}_h$ the linear element space on $\mathcal{T}_h$, $V^{\rm le}_{h0}=V^{\rm le}_h\cap H^1_0(\Omega)$, and $\uV{}^{\rm le}_{h0}=(V_{h0})^2$. Denote by $\mathbb{P}_{h0}$ the space of piecewise constant the integral of which is zero. Consider the Stokes problem: find $(\uu,p)\in \uH{}^1_0(\Omega)\times L^2_0(\Omega)$ such that 
\begin{equation}\label{eq:stokes}
\left\{
\begin{array}{lll}
(\nabla\uu,\nabla \uv)+(p,\dv\uv)&=(\uf,\uv)&\forall\,\uv\in\uH{}^1_0(\Omega)
\\
(\dv\uu,q)&=0&\forall\,q\in L^2_0(\Omega),
\end{array}
\right.
\end{equation}
and its discretization: find $(\uu{}_h,p_h)\in \uV{}^{\rm le}_{h0}\times \mathbb{P}_{h0}$, such that
\begin{equation}\label{eq:p1p0}
\left\{
\begin{array}{lll}
(\nabla\uu{}_h,\nabla \uv{}_h)+(p_h,\dv\uv{}_h)&=(\uf,\uv{}_h)&\forall\,\uv{}_h\in\uV{}^{\rm le}_{h0}
\\
(\dv\uu{}_h,q_h)&=0&\forall\,q_h\in \mathbb{P}_{h0}.
\end{array}
\right.
\end{equation}
In general, \eqref{eq:p1p0} does not provide a stable discretization of \eqref{eq:stokes}, but it can provide good approximation of $\uu$ for some special cases. We adopt the hypothesis below.

\paragraph{\bf Hypothesis G} (c.f. \cite{Pitkaranta.J;Stenberg.R1985,Qin.J;Zhang.S2007})  $\mathcal{G}_h$ is generated by uniformly refining a shape-regular quadrilateral subdivision $\mathcal{G}_{4h}$ of $\Omega$ twice.

\begin{figure}[htbp]
\begin{tikzpicture}

\draw(-4.275,2.6)--(-10.725,1.4);
\draw(-4.275,2.6)--(-7.225,-2.6);
\draw(-10.775,-1.4)--(-7.225,-2.6);
\draw(-10.775,-1.4)-- (-10.725,1.4);

\draw(3.725,2.6)--(-2.725,1.4);
\draw(3.725,2.6)--(0.775,-2.6);
\draw(-2.775,-1.4)--(0.775,-2.6);
\draw(-2.775,-1.4)-- (-2.725,1.4);

\draw[dashed](3.725,2.6)--(-2.775,-1.4);
\draw[dashed](-2.725,1.4)--(0.775,-2.6);

\draw(0.5,2)--(-1.153,-0.39);
\draw(2.25,0)--(-1.153,-0.39);
\draw(-1.153,-0.39)--(-1,-2);
\draw(-1.153,-0.39)--(-2.75,0);

\draw(-1.153,-0.39)node{.};

\draw[dashed](0.5,2)--(2.25,0);
\draw[dashed](2.25,0)--(-1,-2);
\draw[dashed](-2.75,0)--(-1,-2);
\draw[dashed](0.5,2)--(-2.75,0);

\draw(-1.94,0.495)--(-1.1125,1.7);
\draw(-1.94,0.495)--(-2.7375,0.7);
\draw(-1.94,0.495)--(-0.3265,0.805);
\draw(-1.94,0.495)--(-1.9515,-0.195);

\draw(-1.95,-0.89)--(-1.9515,-0.195);
\draw(-1.95,-0.89)--(-2.7625,-0.7);
\draw(-1.95,-0.89)--(-1.0765,-1.195);
\draw(-1.95,-0.89)--(-1.8875,-1.7);

\draw(-0.181,-1.535)--(0.5485,-0.195);
\draw(-0.181,-1.535)--(-1.0765,-1.195);
\draw(-0.181,-1.535)--(1.48625,-1.3);
\draw(-0.181,-1.535)--(-0.13625,-2.3);

\draw(1.29,1.08)--(2.1125,2.3);
\draw(1.29,1.08)--(-0.3265,0.805);
\draw(1.29,1.08)--(2.9675,1.3);
\draw(1.29,1.08)--(0.5485,-0.195);


\draw(-2.9,1.7)node{\Large$a_1$};
\draw(-2.8,-1.7)node{\Large$a_2$};
\draw(0.9,-2.85)node{\Large$a_3$};
\draw(3.85,2.8)node{\Large$a_4$};

\draw(-4,0)node{\Large $\Longrightarrow$};

\draw(-10.9,1.7)node{\Large$a_1$};
\draw(-10.8,-1.7)node{\Large$a_2$};
\draw(-7.1,-2.85)node{\Large$a_3$};
\draw(-4.15,2.8)node{\Large$a_4$};

\end{tikzpicture}

\caption{Illustration of uniformly refining twice a quadrilateral. A quadrilateral is refined by connecting the intersect point of the diagonals to the mid points of the edges. c.f. \cite{Qin.J;Zhang.S2007}.}\label{fig:refinequad}
\end{figure}
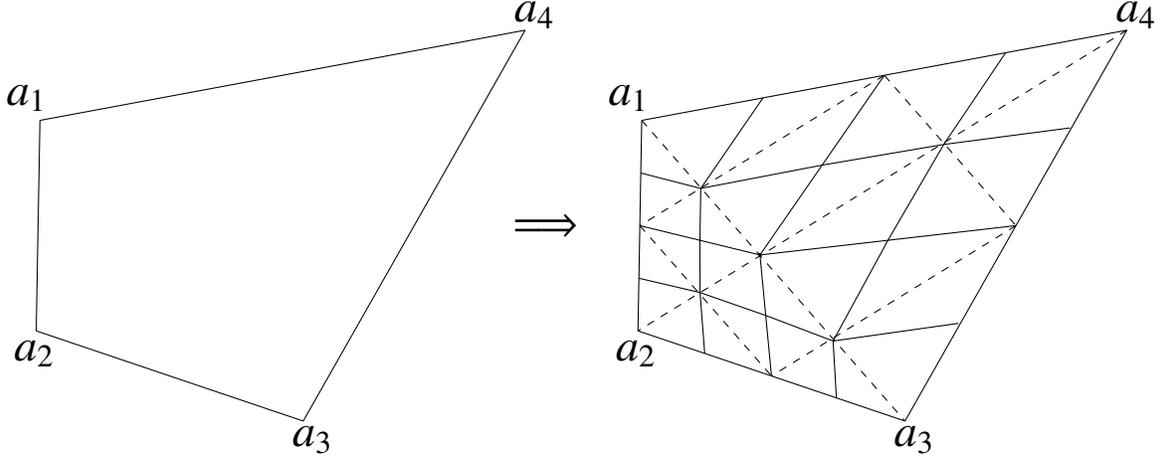

\begin{lemma}\label{lem:ledf}(Theorem 4.2 of \cite{Qin.J;Zhang.S2007})
Let $\mathcal{G}_h$ be a quadrilateral subdivision of $\Omega$ that satisfies {\bf Hypothesis G}. Let $\mathcal{T}_h$ be a triangulation of $\Omega$ made by a criss-cross refinement of $\mathcal{G}_h$, namely dividing each quadrilateral into four subtriangles by the two diagonals. Let $(\uu,p)$ and $(\uu{}_h,p_h)$ be the solutions of \eqref{eq:stokes} and \eqref{eq:p1p0}, respectively. Then
\begin{equation}
\|\uu-\uu{}_h\|_{1,\Omega}\leqslant Ch\|\uu\|_{2,\Omega}
\end{equation}
provided $(\uu,p)\in\uH{}^2(\Omega)\times H^1(\Omega)$.
\end{lemma}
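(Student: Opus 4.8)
The plan is to use the divergence constraint to collapse the mixed error estimate into a pure approximation problem over the discretely divergence-free velocities, and then to transfer that problem to a scalar stream function through the curl operator, where the criss-cross structure of $\mathcal{T}_h$ supplies both the exactness and the optimal spline approximation that the argument needs.

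First I would record that $\uu_h|_T\in(P_1(T))^2$ makes $\dv\uu_h$ piecewise constant; testing the second equation of \eqref{eq:p1p0} against $q_h\in\mathbb{P}_{h0}$ and using $\int_\Omega\dv\uu_h=0$ (which holds since $\uu_h\in\uV{}^{\rm le}_{h0}\subset\uH{}^1_0(\Omega)$) forces $\dv\uu_h$ to vanish on every triangle. Thus $\uu_h$ lies in $\mathring{\uV}{}^{\rm le}_{h0}:=\{\uv_h\in\uV{}^{\rm le}_{h0}:\dv\uv_h|_T=0,\ \forall\,T\in\mathcal{T}_h\}$, and since such fields are globally $H^1$ with elementwise vanishing divergence, $\mathring{\uV}{}^{\rm le}_{h0}\subset\mathring\uH{}^1_0(\Omega)$. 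The continuous solution satisfies $(\nabla\uu,\nabla\uv)=(\uf,\uv)$ for all $\uv\in\mathring\uH{}^1_0(\Omega)$, and $\uu_h$ satisfies the same identity for all test functions in $\mathring{\uV}{}^{\rm le}_{h0}$; as the latter sits inside the former, the Galerkin orthogonality $(\nabla(\uu-\uu_h),\nabla\uv_h)=0$ holds on $\mathring{\uV}{}^{\rm le}_{h0}$, and C\'ea's lemma gives
\[
\|\uu-\uu_h\|_{1,\Omega}\leqslant C\inf_{\uv_h\in\mathring{\uV}{}^{\rm le}_{h0}}\|\uu-\uv_h\|_{1,\Omega}.
\]
This step deliberately avoids the pressure: $P_1$--$P_0$ has no uniform inf--sup stability on these grids and hence no bounded Fortin operator, but the velocity estimate requires none.

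Next I would introduce the stream function. Since $\Omega$ is simply connected and $\uu\in\mathring\uH{}^1_0(\Omega)$, write $\uu=\curl\psi$ with $\psi\in H^3(\Omega)\cap H^2_0(\Omega)$ and $\|\psi\|_{3,\Omega}\leqslant C\|\uu\|_{2,\Omega}$, the regularity coming from $\uu\in\uH{}^2(\Omega)$. The structural fact I would rely on is that on the criss-cross triangulation the discretely divergence-free space is exactly the curl of a clamped $C^1$ piecewise-quadratic spline space $W_h$ on $\mathcal{T}_h$, i.e. $\curl W_h=\mathring{\uV}{}^{\rm le}_{h0}$ (an exactness of the same flavour as Lemma \ref{lem:esqmltz}). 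It is precisely the criss-cross (four-directional) geometry that makes such $C^1$ quadratics rich — on a generic triangulation they are over-constrained — so $W_h$ is both nontrivial and carries full approximation power. Granting this, and using the classical estimate $\inf_{\phi_h\in W_h}\|\psi-\phi_h\|_{2,\Omega}\leqslant Ch\|\psi\|_{3,\Omega}$ for $C^1$ quadratics on criss-cross grids together with $\|\curl(\psi-\phi_h)\|_{1,\Omega}\leqslant C\|\psi-\phi_h\|_{2,\Omega}$, the infimum above is bounded by $Ch\|\psi\|_{3,\Omega}\leqslant Ch\|\uu\|_{2,\Omega}$, which is the claim.

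The main obstacle is the exactness $\curl W_h=\mathring{\uV}{}^{\rm le}_{h0}$ with matching boundary conditions, namely that every continuous, elementwise divergence-free $P_1$ field is the curl of a clamped $C^1$ quadratic spline. This is exactly where the criss-cross refinement and {\bf Hypothesis G} enter: the interior vertex produced in each quadrilateral by the two diagonals, together with the macro-structure inherited from refining $\mathcal{G}_{4h}$ twice, pins down the spurious pressure modes and makes the discrete de Rham sequence exact over the simply connected $\Omega$. I would establish this by a dimension count on $W_h\xrightarrow{\ \curl\ }\mathring{\uV}{}^{\rm le}_{h0}\to 0$, balancing $\dim\mathring{\uV}{}^{\rm le}_{h0}=\dim\uV{}^{\rm le}_{h0}-\dim(\dv_h\uV{}^{\rm le}_{h0})$ (tracked through Euler's relation $\mathfrak{F}+\mathfrak{X}=\mathfrak{E}+1$ and the known count of spurious modes) against $\dim W_h$ modulo the kernel of $\curl$, and by exhibiting a local stream-function preimage on each macroelement. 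Once this exactness and the spline approximation estimate are secured, the remaining manipulations are routine.
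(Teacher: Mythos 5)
The paper does not actually prove this lemma: it is imported verbatim as Theorem~4.2 of \cite{Qin.J;Zhang.S2007}, where (in the tradition of \cite{Pitkaranta.J;Stenberg.R1985}) it is obtained essentially from a macroelement stability analysis of the $P_1$--$P_0$ pair --- identifying the locally supported spurious checkerboard pressure modes on each macroelement of $\mathcal{G}_{4h}$ and proving an inf--sup condition on their complement --- and that is precisely where {\bf Hypothesis G} is consumed. The first half of your argument is correct and clean: $\dv\uu{}_h$ is piecewise constant with zero mean, so testing with $q_h=\dv\uu{}_h$ forces it to vanish elementwise; hence $\uu{}_h\in\mathring{\uV}{}^{\rm le}_{h0}\subset\mathring{\uH}{}^1_0(\Omega)$, Galerkin orthogonality holds on $\mathring{\uV}{}^{\rm le}_{h0}$, and C\'ea reduces the lemma to bounding $\inf_{\uv{}_h\in\mathring{\uV}{}^{\rm le}_{h0}}\|\uu-\uv{}_h\|_{1,\Omega}$. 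The stream-function reduction ($\uu=\curl\psi$ with $\psi\in H^2_0(\Omega)\cap H^3(\Omega)$ on the simply connected $\Omega$) is also fine.

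The gap is in the one step that carries all the weight: the asserted ``classical'' estimate $\inf_{\phi_h\in W_h}\|\psi-\phi_h\|_{2,\Omega}\leqslant Ch\|\psi\|_{3,\Omega}$ for the \emph{clamped} $C^1$ piecewise-quadratic spline space $W_h$ on $\mathcal{T}_h$. This is not an off-the-shelf result in the generality required: $\mathcal{T}_h$ is a criss-cross refinement of a general shape-regular quadrilateral mesh, not a uniform type-2 partition, so box-spline quasi-interpolation is unavailable; and the boundary conditions $\phi_h=\partial_{\mathbf n}\phi_h=0$ thin the space drastically near $\partial\Omega$ (compare Remark~\ref{rem:noicnowm}, where even the larger \emph{nonconforming} relaxation $V^{\rm RQM}_{h0}$ is trivial on subdivisions without interior cells). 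Producing a quasi-interpolant into $W_h$ that preserves the clamped conditions and retains first-order $H^2$ accuracy is essentially as hard as the lemma itself, so the proposal defers rather than closes the argument. A structural symptom of this: you place the role of {\bf Hypothesis G} in the surjectivity of $\curl:W_h\to\mathring{\uV}{}^{\rm le}_{h0}$, but surjectivity is never used --- only the trivial inclusion $\curl W_h\subseteq\mathring{\uV}{}^{\rm le}_{h0}$ enters the chain $\inf_{\uv{}_h\in\mathring{\uV}{}^{\rm le}_{h0}}\|\uu-\uv{}_h\|_{1,\Omega}\leqslant\inf_{\phi_h\in W_h}\|\curl(\psi-\phi_h)\|_{1,\Omega}$ --- so as written your proof would apply to an arbitrary criss-cross mesh with no macro structure, which the theorem does not claim. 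The missing content is exactly the approximation power of the discretely divergence-free space (equivalently, of the clamped spline space) \emph{under} {\bf Hypothesis G}, and that must be supplied either by the macroelement analysis of the cited reference or by a genuine boundary-aware spline approximation argument.
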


\begin{remark}
Denote $\mathring{\uV}{}^{\rm le}_{h0}:=\{\uv{}_h\in \uV{}^{\rm le}_{h0}:\dv\uv{}_h=0\}$. 
Lemma \ref{lem:ledf} reveals that the 
\begin{equation}\displaystyle
\inf_{\uv{}_h\in \mathring{\uV}{}^{\rm le}_{h0}}\|\uu-\uv\|_{1,h}\leqslant Ch\|\uu\|_{2,\Omega},\ \mbox{for}\,\uu\in \mathring\uH{}^1_0(\Omega)\cap \uH{}^2(\Omega).
\end{equation}
\end{remark}
Several finite elements are mentioned in this paper, and for readers' convenience, we list the abbreviations below. 
\begin{description}
\item[QM] Quadrilateral Morley
\item[QLTZ] Quadrilateral Lin-Tobiska-Zhou
\item[PS] Park-Sheen
\item[le] linear element
\item[RQM] Reduced Quadrilateral Morley
\item[QGB] Quadrilateral Generalized Bilinear (see Section \ref{sec:qgb})
\item[QW] Quadrilateral Wilson (see Section \ref{sec:qw})
\end{description}

\section{Two quadrilateral finite elements}
\label{sec:quadele}

In this section, we present two new finite elements on quadrilateral grids. 

\subsection{A quadrilateral generalized bilinear element}
\label{sec:qgb}

Define a quadrilateral generalized bilinear element by $(Q,P_Q^{\rm QGB},D_Q^{\rm QGB})$, with
\begin{enumerate}
\item $Q$ is a convex quadrilateral;
\item $P_Q^{\rm QGB}=P_1(Q)+{\rm span}\{\xi^2-\eta^2\}$;
\item the components of $D_Q^{\rm QGB}=\{d_i^{\rm QGB}\}_{i=1:4}$ for any $v\in H^1(Q)$ are: 
$$
d^{\rm QGB}_i(v)=\fint_{e_i}v,\ e_i\ \mbox{the\ edges\ of}\ Q.
$$
\end{enumerate}
Define on a quadrilateral $Q$
\begin{eqnarray}
\phi_1=\frac{3}{8}(\xi^2-\eta^2)+\frac{\beta-2}{4}\xi+\frac{\alpha-4}{4}\eta+\frac{2-\alpha^2+\beta^2}{8}
\\
\phi_2=-\frac{3}{8}(\xi^2-\eta^2)-\frac{\beta}{4}\xi+\frac{2-\alpha}{4}\eta+\frac{\alpha^2-\beta^2+2}{8}
\\
\phi_3=\frac{3}{8}(\xi^2-\eta^2)+\frac{\beta+2}{4}\xi+\frac{\alpha+4}{4}\eta+\frac{2-\alpha^2+\beta^2}{8}
\\
\phi_4=-\frac{3}{8}(\xi^2-\eta^2)-\frac{\beta}{4}\xi-\frac{2+\alpha}{4}\eta+\frac{\alpha^2-\beta^2+2}{8},
\end{eqnarray}
then $\phi_i\in P^{\rm QGB}_Q$ and $d^{\rm QGB}_i(\phi_j)=\delta_{ij}$, $i,j=1:4$. This proves the uni-solvence. Given $v\in P^{\rm QGB}_Q$, then $v=\sum_{i=1}^4 d_i^{\rm QGB}(v)\phi_i.$ Direct calculation leads to that
$$
v=\frac{3}{8}(d_1^{\rm QGB}(v)+d_3^{\rm QGB}(v)-d_2^{\rm QGB}(v)-d_4^{\rm QGB}(v))(\xi^2-\eta^2)+v',\ \ \mbox{with}\ v'\in P_1(Q).
$$ 
This way, given $v\in P^{\rm QGB}_{Q}$, $v\in P_1(Q)$ if and only if $d_1^{\rm QGB}(v)+d_3^{\rm QGB}(v)=d_2^{\rm QGB}(v)+d_4^{\rm QGB}(v)$.

Note that the nodal parameters make sense for $v\in H^1(Q)$. Define the interpolator ${\rm I}^{\rm QGB}_Q:H^1(Q)\to P^{\rm QGB}_Q$ by $d^{\rm QGB}_i({\rm I}^{\rm QGB}_Q v)=d^{\rm QGB}_i(v)$ for $i=1:4$. 
\begin{lemma}\label{lem:stabszhang}(c.f. proof of Lemma 3 of \cite{Zhang.S2016nm})
There is a constant $C$, depending on the regularity of the quadrilateral $Q$ only, such that 
\begin{equation}
|\fint_{e_i}w-\fint_{e_j}w|\leqslant C|w|_{1,Q},\ \ \forall\,w\in H^1(Q).
\end{equation}
\end{lemma}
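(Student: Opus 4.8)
The plan is to prove the estimate by transferring it to a reference configuration through the affine change of variables $T(\hat x)=O+\xi\mathbf r+\eta\mathbf s$ (the matrix $B=[\,\mathbf r\ \mathbf s\,]$ with columns $\mathbf r,\mathbf s$), and there to invoke a Deny--Lions / Bramble--Hilbert argument. The reference cell $\hat Q$ is the quadrilateral whose vertices are $a_1,\dots,a_4$ read off in the $(\xi,\eta)$-coordinates, so that $\hat Q$ depends only on $\alpha,\beta$, and $w$ corresponds to $\hat w:=w\circ T$. First I would record the two elementary facts that make the transfer work. (i) The normalized edge average over a segment is affine-invariant, $\fint_{e_i}w\dS=\fint_{\hat e_i}\hat w\dS$, because parametrizing a segment by $t\in[0,1]$ turns $\fint$ into $\int_0^1$, which $T$ preserves; hence the functional to be estimated is unchanged. (ii) The seminorm scales as $|\hat w|_{1,\hat Q}^2\le (\|B\|^2/\det B)\,|w|_{1,Q}^2$, obtained from the chain rule $\hat\nabla\hat w=B^{T}\nabla w$ together with $d\hat x=dx/\det B$. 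Since $\det B=\mathbf r\times\mathbf s$ and $\|B\|^2\le|\mathbf r|^2+|\mathbf s|^2$, the factor $\|B\|^2/\det B$ is bounded by $2\mathcal R_Q^2$, which isolates the part of the regularity dependence coming from $\mathbf r,\mathbf s$.

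It then suffices to prove the reference inequality $|\fint_{\hat e_i}\hat w-\fint_{\hat e_j}\hat w|\le \hat C\,|\hat w|_{1,\hat Q}$. I would set $\hat F(\hat w):=\fint_{\hat e_i}\hat w-\fint_{\hat e_j}\hat w$ and observe that $\hat F$ is a bounded linear functional on $H^1(\hat Q)$: by Cauchy--Schwarz $|\fint_{\hat e}\hat w|\le |\hat e|^{-1/2}\|\hat w\|_{0,\hat e}$, and the trace inequality $\|\hat w\|_{0,\partial\hat Q}\le C\|\hat w\|_{1,\hat Q}$ controls the boundary $L^2$ norm. Crucially $\hat F$ annihilates constants, $\hat F(c)=0$. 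Therefore, for every constant $c$, $|\hat F(\hat w)|=|\hat F(\hat w-c)|\le\|\hat F\|\,\|\hat w-c\|_{1,\hat Q}$, and taking the infimum over $c$ together with the Poincar\'e--Wirtinger inequality $\inf_c\|\hat w-c\|_{1,\hat Q}\le C_P(\hat Q)\,|\hat w|_{1,\hat Q}$ gives the claim with $\hat C=\|\hat F\|\,C_P(\hat Q)$. Combining this with the scaling of step (ii) produces the stated bound with a constant of the form $C=\hat C\,\sqrt{2}\,\mathcal R_Q$.

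The step I expect to be delicate is controlling the reference constant $\hat C=\|\hat F\|\,C_P(\hat Q)$ uniformly, since $\hat Q$ still varies with $(\alpha,\beta)$. Both the trace constant and the Poincar\'e constant degenerate as the cell approaches the boundary $|\alpha|+|\beta|=1$: when an edge collapses to a point, the corresponding edge average tends to a point evaluation, which in two dimensions is not controlled by $|\hat w|_{1,\hat Q}$, so the estimate genuinely forces $(\alpha,\beta)$ to remain in a compact subset of $\{|\alpha|+|\beta|<1\}$. I would therefore argue that the regularity hypothesis confines the reference cells to a compact family of uniformly non-degenerate convex quadrilaterals, on which $\|\hat F\|$ and $C_P(\hat Q)$ are uniformly bounded by a continuity/compactness argument; this is the point at which ``depending on the regularity of $Q$ only'' must be understood to include the bound keeping the vertices away from the edge-collapse configuration. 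The remaining manipulations are the routine trace, chain-rule, and change-of-variable computations already sketched above.
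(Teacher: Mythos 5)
The paper does not actually prove this lemma; it only points to the proof of Lemma~3 of \cite{Zhang.S2016nm}, so there is no in-paper argument to compare against line by line. Your route --- pull back by the affine map $T(\hat x)=O+\xi\r+\eta\s$, note that normalized edge averages are affine-invariant, bound the seminorm distortion by $\|B\|^2/\det B\leqslant 2\mathcal R_Q^2$, and then apply the quotient-space (Deny--Lions) argument to the constant-annihilating functional $\hat F$ on the reference cell --- is the standard one and is sound; each of the scaling identities you record is correct, and the trace-plus-Poincar\'e step on $\hat Q$ is routine. The one place where you go beyond bookkeeping is also the most valuable part of your write-up: the reference cell $\hat Q$ still depends on $(\alpha,\beta)$, and the indicator $\mathcal R_Q$ as defined in this paper involves only $\r$ and $\s$, so it does \emph{not} prevent $(\alpha,\beta)$ from approaching the degenerate set $|\alpha|+|\beta|=1$, where an edge collapses, the edge average tends to a point evaluation, and no bound by $|w|_{1,Q}$ is possible in two dimensions (an explicit counterexample is a family with $\mathcal R_Q\equiv1$, $\beta=0$, $\alpha\to1$, tested against an $H^1$ function with a logarithmic-type singularity at the collapsing edge). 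You are right that the lemma is only true if ``regularity'' is understood to also keep $(\alpha,\beta)$ in a compact subset of $\{|\alpha|+|\beta|<1\}$; this is implicit in the cited reference but not in the definition given here, so your caveat identifies a real imprecision in the statement rather than a gap in your proof. If you want to avoid the continuity/compactness argument for the uniformity of $\|\hat F\|$ and $C_P(\hat Q)$, you could instead represent $\fint_{\hat e_i}\hat w-\fint_{\hat e_j}\hat w$ directly as an integral of $\hat\nabla\hat w$ against an explicit bounded kernel by joining the two edges with a family of segments; that yields an explicit constant in terms of the edge lengths and the diameter of $\hat Q$ and makes the dependence on the non-degeneracy parameters transparent.
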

\begin{lemma}
The interpolation ${\rm I}^{\rm QGB}_Q$ is stable in the sense that $\|\nabla {\rm I}_h^{\rm QGB}w\|_{0,Q}\leqslant C\|\nabla w\|_{0,Q}$, with a generic constant $C$ depending on the regularity of the quadrilateral only.
\end{lemma}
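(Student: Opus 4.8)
The plan is to combine the explicit nodal basis $\{\phi_i\}_{i=1}^4$, the constant-reproduction property of the interpolation, and the edge-average stability of Lemma \ref{lem:stabszhang}, so that no inverse estimate or reference-cell pullback is needed. First I would record that $1\in P_1(Q)\subset P^{\rm QGB}_Q$ and $d_i^{\rm QGB}(1)=1$ for every $i$, so unisolvence forces $\mathrm{I}^{\rm QGB}_Q1=1$, that is $\sum_{i=1}^4\phi_i\equiv 1$ on $Q$ and hence $\sum_{i=1}^4\nabla\phi_i\equiv 0$. Since $\mathrm{I}^{\rm QGB}_Qw=\sum_{i=1}^4(\fint_{e_i}w)\phi_i$, I can use this identity to subtract the reference value $\fint_{e_1}w$ and obtain
\[
\nabla \mathrm{I}^{\rm QGB}_Qw=\sum_{i=1}^4\Big(\fint_{e_i}w-\fint_{e_1}w\Big)\nabla\phi_i,
\]
which eliminates the constant mode and reduces the proof to two independent bounds.

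The scalar factors are handled directly by Lemma \ref{lem:stabszhang}, which gives $|\fint_{e_i}w-\fint_{e_1}w|\leqslant C|w|_{1,Q}$ with $C$ depending only on the regularity of $Q$. It then remains to bound the basis-function gradients $\|\nabla\phi_i\|_{0,Q}$ by a constant depending only on $\mathcal{R}_Q$. Here I would use the explicit formulas for $\phi_i$: their coefficients are polynomials in $\alpha,\beta$, which are bounded because $|\alpha|+|\beta|<1$, while $\xi,\eta$ are $O(1)$ on $Q$ (the vertex coordinates in the $\mathbf{r}O\mathbf{s}$ frame have modulus at most $2$). Consequently $\nabla\phi_i$ is a bounded combination of $\nabla\xi$ and $\nabla\eta$ (the only quadratic term is $\xi^2-\eta^2$, whose gradient is $2\xi\nabla\xi-2\eta\nabla\eta$), so $|\nabla\phi_i|\leqslant C(|\nabla\xi|+|\nabla\eta|)$ pointwise.

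The scaling estimate for $\nabla\xi,\nabla\eta$ is then elementary: from $\nabla\xi\cdot\mathbf{r}=1$, $\nabla\xi\cdot\mathbf{s}=0$ one finds $|\nabla\xi|=|\mathbf{s}|/(\mathbf{r}\times\mathbf{s})$ and symmetrically $|\nabla\eta|=|\mathbf{r}|/(\mathbf{r}\times\mathbf{s})$, both constant on $Q$. As the area of $Q$ is comparable to $\mathbf{r}\times\mathbf{s}$, integrating gives $\|\nabla\xi\|_{0,Q}^2\leqslant C\,|\mathbf{s}|^2/(\mathbf{r}\times\mathbf{s})=C\frac{|\mathbf{s}|}{|\mathbf{r}|}\cdot\frac{|\mathbf{r}||\mathbf{s}|}{\mathbf{r}\times\mathbf{s}}\leqslant C\mathcal{R}_Q^2$, and likewise for $\eta$; hence $\|\nabla\phi_i\|_{0,Q}\leqslant C(\mathcal{R}_Q)$. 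Applying the triangle and Cauchy--Schwarz inequalities to the displayed representation yields $\|\nabla \mathrm{I}^{\rm QGB}_Qw\|_{0,Q}\leqslant C|w|_{1,Q}=C\|\nabla w\|_{0,Q}$ with $C=C(\mathcal{R}_Q)$, which is the claim.

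The only genuinely technical point is this last scaling bound, namely expressing the anisotropy of $\nabla\xi,\nabla\eta$ and the cell area purely through the dimensionless ratios that make up $\mathcal{R}_Q$; because the geometric normalization fixes the $\mathbf{r}O\mathbf{s}$ frame and the vertex positions in terms of $\alpha,\beta$, this is careful bookkeeping rather than a real obstacle. What makes the whole argument clean is the identity $\sum_i\nabla\phi_i=0$, which removes the constant part without any Poincar\'e-type inequality and keeps the dependence on $\mathcal{R}_Q$ explicit; I would finally verify that the estimate is manifestly insensitive to the cell size $h_Q$, which is automatic once $\nabla\xi,\nabla\eta$ are written through those ratios.
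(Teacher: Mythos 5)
Your proof is correct and follows essentially the same route as the paper: both arguments reduce $\nabla\,{\rm I}^{\rm QGB}_Qw$ to differences of edge averages controlled by Lemma \ref{lem:stabszhang} and then invoke a shape-regularity scaling bound in the $\r O\s$ frame. The only differences are organizational — you obtain the reduction to differences via the partition of unity $\sum_i\phi_i\equiv 1$ rather than by expanding $\partial_{\r}\phi$ and $\partial_{\s}\phi$ explicitly in the $d_i$, and you spell out the scaling of $\nabla\xi,\nabla\eta$ and the cell area that the paper compresses into the symbol $\cequiv$.
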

\begin{proof}
For simplicity, denote $d_i:=\fint_{e_i}w$, and $\phi=\sum_{i=1:4}d_i\phi_i$. Then
$$
\partial_{\r}\phi=3\xi/4(d_1-d_2+d_3-d_4)+\beta/4(d_1-d_2+d_3-d_4)+1/2(d_3-d_1),
$$
and
$$
\partial_{\s}\phi=3/4\eta(-d_1+d_2-d_3+d_4)+\alpha/4(d_1-d_2+d_3-d_4)+(d_3-d_1)+1/2(d_2-d_4).
$$
We have used the fact that $\partial_{\bf r}\eta=\partial_{\bf s}\xi=0$. This way, by Lemma \ref{lem:stabszhang},
\begin{multline*}
\int_Q|\nabla \phi|^2\cequiv \fint_Q|\partial_{\r}\phi|^2+|\partial_{\s}\phi|^2
\leqslant C\left((d_1-d_2)^2+(d_3-d_4)^2+(d_1-d_3)^2+(d_2-d_4)^2\right) \leqslant C|w|_{1,Q}^2.
\end{multline*}
The proof is completed. 
\end{proof}

Given a quadrilateral subdivision $\mathcal{G}_h$, define the generalized bilinear element space by 
$$
V^{\rm QGB}_h:=\{w\in L^2(\Omega):w|_Q\in P^{\rm QGB}_Q,\ \forall\,Q\in\mathcal{G}_h, \ \fint_ew\ \mbox{is\ continuous\ across}\ e\in\mathcal{E}^i_h\},
$$ 
and, associated with $H^1_0(\Omega)$, 
$$
V^{\rm QGB}_{h0}:=\{w\in V^{\rm QGB}_h:\ \fint_ew=0\ \mbox{at}\ e\in\mathcal{E}^b_h\}.
$$
Define ${\rm I}^{\rm QGB}_h:H^1(\Omega)\to V^{\rm QGB}_h$ by $({\rm I}^{\rm QGB}_hv)|_Q={\rm I}^{\rm QGB}_Q(v|_Q)$ for any $Q\in\mathcal{G}_h$. Then ${\rm I}^{\rm QGB}_hH^1_0(\Omega)\subset V^{\rm QGB}_{h0}$. Denote $\undertilde{\rm I}{}^{\rm QGB}_h=({\rm I}^{\rm QGB}_h)^2$.

\begin{lemma}\label{lem:propqgb}
\begin{enumerate}
\item $|{\rm I}_h^{\rm QGB}v-v|_{1,h}\leqslant Ch|v|_{2,\Omega}$ for $v\in H^1_0(\Omega)\cap H^2(\Omega)$;
\item let $\mathcal{T}_h$ be a triangulation of $\Omega$ made by a criss-cross refinement of $\mathcal{G}_h$; then 
$$
{\rm I}^{\rm QGB}_hV^{\rm le}_{h0}(\mathcal{T}_h) = V^{\rm PS}_{h0}(\mathcal{G}_h),\ \mbox{and}\ \ \undertilde{\rm I}{}^{\rm QGB}_h\mathring\uV{}^{\rm le}_{h0}(\mathcal{T}_h)=\mathring{\uV}{}^{\rm PS}_{h0}(\mathcal{G}_h).
$$
\end{enumerate}
\end{lemma}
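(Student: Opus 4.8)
The plan is to handle the two assertions separately, exploiting that ${\rm I}^{\rm QGB}_Q$ is \emph{local}, reproduces $P_1(Q)$, and preserves the four edge averages $\fint_{e_i}(\cdot)\dS$. For part (1), the key point is that $\xi$ and $\eta$ are genuine affine coordinate functions, so $P^{\rm QGB}_Q$ is a space of honest polynomials and the interpolant is assembled cellwise; a standard scaling is therefore admissible here, even though it fails for the global space because of its nonlocal continuity constraints. Since $P_1(Q)\subset P^{\rm QGB}_Q$ and ${\rm I}^{\rm QGB}_Q$ matches edge averages, uni-solvence gives ${\rm I}^{\rm QGB}_Q p=p$ for every $p\in P_1(Q)$. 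For $v\in H^2(Q)$ and arbitrary $p\in P_1(Q)$ I would write ${\rm I}^{\rm QGB}_Q v-v={\rm I}^{\rm QGB}_Q(v-p)-(v-p)$, bound the first term by the already established stability $\|\nabla{\rm I}^{\rm QGB}_Q w\|_{0,Q}\leqslant C\|\nabla w\|_{0,Q}$, take the infimum over $p$, and invoke a Bramble--Hilbert/Deny--Lions estimate $\inf_{p\in P_1(Q)}|v-p|_{1,Q}\leqslant Ch_Q|v|_{2,Q}$ with constant controlled by $\mathcal R_Q$. Squaring and summing over $Q\in\mathcal G_h$ yields the global bound.

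For the first identity in part (2), I would prove the two inclusions. For $\subseteq$, take $w\in V^{\rm le}_{h0}(\mathcal T_h)$. Because the criss-cross refinement subdivides $Q$ only by its diagonals, each outer edge $e_i=a_ia_{i+1}$ is an uncut edge of $\mathcal T_h$, so $w|_{e_i}$ is affine and $\fint_{e_i}w\dS=\tfrac12(w(a_i)+w(a_{i+1}))$. Telescoping gives $\fint_{e_1}w+\fint_{e_3}w=\tfrac12\sum_{j}w(a_j)=\fint_{e_2}w+\fint_{e_4}w$, which by the QGB criterion forces ${\rm I}^{\rm QGB}_Q(w|_Q)\in P_1(Q)$. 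Edge-average preservation together with continuity of $w$ makes these averages single-valued across interior edges, and $w\in H^1_0$ makes them vanish on boundary edges; hence ${\rm I}^{\rm QGB}_h w\in V^{\rm PS}_{h0}$. For $\supseteq$, given $z\in V^{\rm PS}_{h0}$ I would construct a preimage by prescribing nodal values of $w$ on $\mathcal T_h$: set $w$ arbitrarily (say zero) at the diagonal centers, and solve the unsigned vertex system $w(a)+w(a')=2\fint_{aa'}z\dS$ on the quad-vertex graph. The compatibility condition around each quadrilateral face (a $4$-cycle) is precisely the Park--Sheen relation $\fint_{e_1}z+\fint_{e_3}z=\fint_{e_2}z+\fint_{e_4}z$, which holds for $P_1$ data; since $\Omega$ is simply connected these face cycles generate all cycles, so the system is globally consistent, and the zero boundary edge averages allow the boundary vertex values to be taken as zero. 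The resulting $w\in V^{\rm le}_{h0}$ satisfies $\fint_e w=\fint_e z$ for every edge, whence ${\rm I}^{\rm QGB}_h w$ and $z$ are both piecewise $P_1$ with identical edge averages and therefore coincide.

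For the second (divergence-free) identity, the inclusion $\undertilde{\rm I}{}^{\rm QGB}_h\mathring\uV{}^{\rm le}_{h0}\subset\mathring\uV{}^{\rm PS}_{h0}$ follows cleanly from the divergence theorem: for $\uv\in\mathring\uV{}^{\rm le}_{h0}$ one has $\int_Q\dv(\undertilde{\rm I}{}^{\rm QGB}_h\uv)=\sum_i|e_i|\,\mathbf n_{e_i}\cdot\fint_{e_i}\undertilde{\rm I}{}^{\rm QGB}_h\uv=\sum_i|e_i|\,\mathbf n_{e_i}\cdot\fint_{e_i}\uv=\int_Q\dv\uv=0$, and since $\undertilde{\rm I}{}^{\rm QGB}_h\uv$ is piecewise $P_1$ its divergence is piecewise constant, hence identically zero on $Q$. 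The converse inclusion is where I expect the main obstacle: given $\uz\in\mathring\uV{}^{\rm PS}_{h0}$, the scalar surjectivity above furnishes some $\uv\in\uV{}^{\rm le}_{h0}$ with $\undertilde{\rm I}{}^{\rm QGB}_h\uv=\uz$, but this $\uv$ need not be divergence-free; the piecewise-constant $\dv_h\uv$ only has vanishing mean on each $Q$. I would remove it by correcting $\uv$ within $\ker\undertilde{\rm I}{}^{\rm QGB}_h\cap\uV{}^{\rm le}_{h0}$ (the subspace carried by the center nodes and the vertex values killing all edge averages), showing that this kernel surjects onto the mean-zero piecewise constants on each cell so that a divergence-free representative exists. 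Equivalently, and perhaps more transparently, I would obtain this surjectivity from the exactness/commuting framework of \cite{Zhang.S2016nm}, of which the present statement is a specialization, in direct parallel with the relation $\curl_h V^{\rm QM}_{h0}=\mathring\uV{}^{\rm QLTZ}_{h0}$ of Lemma \ref{lem:esqmltz}; verifying that $\undertilde{\rm I}{}^{\rm QGB}_h$ respects this exactness is the crux of the whole lemma.
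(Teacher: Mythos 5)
Your part (1) is correct but takes a genuinely different route from the paper's. You localize: $P_1(Q)\subset P^{\rm QGB}_Q$ together with uni-solvence gives ${\rm I}^{\rm QGB}_Qp=p$ for $p\in P_1(Q)$, and then the cellwise stability bound plus Deny--Lions on each convex cell yields the estimate after summing. The paper instead argues globally: since $V^{\rm PS}_{h0}\subset V^{\rm QGB}_{h0}$ and ${\rm I}^{\rm QGB}_h$ fixes $V^{\rm QGB}_{h0}$, it writes $|{\rm I}^{\rm QGB}_hv-v|_{1,h}=\inf_{w_h\in V^{\rm QGB}_{h0}}|{\rm I}^{\rm QGB}_h(v-w_h)-(v-w_h)|_{1,h}\leqslant C\inf_{w_h\in V^{\rm PS}_{h0}}|v-w_h|_{1,h}$ and quotes the Park--Sheen approximation lemma. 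Your version is more self-contained (it does not need the Altmann--Carstensen estimate) at the price of having to control the Deny--Lions constant on a convex quadrilateral by $\mathcal{R}_Q$, which is fine for convex cells. Your forward inclusions in part (2) --- the identity $\fint_{e_1}w+\fint_{e_3}w=\tfrac12\sum_j w(a_j)=\fint_{e_2}w+\fint_{e_4}w$ forcing ${\rm I}^{\rm QGB}_Q(w|_Q)\in P_1(Q)$, and $\int_Q\dv\,\undertilde{\rm I}{}^{\rm QGB}_h\uv=\int_Q\dv\,\uv=0$ combined with constancy of the divergence of a $P_1$ field --- are exactly the paper's argument.

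Where you and the paper part ways is the reverse inclusions. The paper's proof in fact only establishes ${\rm I}^{\rm QGB}_hV^{\rm le}_{h0}\subseteq V^{\rm PS}_{h0}$ and $\undertilde{\rm I}{}^{\rm QGB}_h\mathring\uV{}^{\rm le}_{h0}\subseteq\mathring{\uV}{}^{\rm PS}_{h0}$ and asserts equality without further argument; only these inclusions are used downstream (Lemma \ref{lem:approxvfps} needs only $\undertilde{\rm I}{}^{\rm QGB}_h\undertilde{\rm P}{}^{\rm le}_{h0}\uu\in\ruvps_{h0}$). Your graph-theoretic construction for the scalar surjectivity is essentially workable --- the quad-vertex graph is bipartite because all bounded faces are $4$-cycles, so the unsigned system reduces to a difference system whose cycle conditions are generated by the face conditions, i.e.\ the Park--Sheen relations --- though the bipartiteness and the boundary normalization need to be written out. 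The divergence-free surjectivity, however, you leave genuinely open: correcting $\uv$ within $\ker\undertilde{\rm I}{}^{\rm QGB}_h\cap\uV{}^{\rm le}_{h0}$ requires the divergence of that kernel to cover the per-cell mean-zero piecewise constants, and on a criss-cross macroelement the divergence of the interior $P_1$ bubbles misses the checkerboard mode, so this is precisely the spurious-mode analysis of \cite{Qin.J;Zhang.S2007} and is not free; it is not even clear it holds without something like Hypothesis G. So as a proof of the stated equalities your proposal has a real gap at exactly the point you flag as the crux; as a proof of what the paper actually establishes and uses, you have everything.
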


\begin{proof}
\begin{enumerate}
\item As $V^{\rm PS}_{h0}\subset V^{\rm QGB}_{h0}$, ${\rm I}^{\rm QGB}_h w_h=w_h$ for $w_h\in  V^{\rm QGB}_{h0}$, and 
$$
|{\rm I}^{\rm QGB}_h v-v|_{1,h}=\inf_{w_h\in  V^{\rm QGB}_{h0}}|{\rm I}^{\rm QGB}_h (v-w_h)-(v-w_h)|_{1,h}\leqslant C \inf_{w_h\in V^{\rm PS}_{h0}}|v-w_h|_{1,h}\leqslant Ch|v|_{2,\Omega}.
$$
\item Given $w_h\in V^{\rm le}_{h0}(\mathcal{T}_h)$, then $\fint_e w=\frac{1}{2}(w_h(L_e)+w_h(R_e))$ for every $e\in\mathcal{E}_h$, where $L_e$ and $R_e$ are the two ends of $e$. Now given $Q\in\mathcal{G}_h$ with edges $e^Q_i$, $i=1:4$ with anticlockwise order, evidently $\fint_{e^Q_1}w_h+\fint_{e^Q_3}w_h=\fint_{e^Q_2}w_h+\fint_{e^Q_4}w_h$, and $ \fint_{e^Q_1}{\rm I}^{\rm QGB}_h w_h+\fint_{e^Q_3}{\rm I}^{\rm QGB}_h w_h=\fint_{e^Q_2} {\rm I}^{\rm QGB}_h w_h+\fint_{e^Q_4} {\rm I}^{\rm QGB}_h w_h$, thus ${\rm I}^{\rm QGB}_hw_h|_Q\in P_1(Q)$. Namely ${\rm I}^{\rm QGB}_hw_h|_Q\in \mathrm{V}^{\rm PS}_{h0}(\mathcal{G}_h)$. Similarly, $\undertilde{\rm I}{}^{\rm QGB}_h\uV{}^{\rm le}_{h0}(\mathcal{T}_h)=\uV{}^{\rm PS}_{h0}(\mathcal{G}_h)$. It is easy to verify that $\int_K \dv\undertilde{\rm I}{}^{\rm QGB}_h \uw{}_h=\int_K \uw{}_h$, and the assertion follows. 
\end{enumerate}
The proof is completed. 
\end{proof}
\begin{remark}
A similar result to Item 2 of Lemma \ref{lem:propqgb} can be found in \cite{Hu.J;Shi.Z2005}. 
\end{remark}

\subsection{A quadrilateral Wilson element}
\label{sec:qw}

The quadrilateral Wilson element is defined by $(Q,P_Q^{\rm QW},D_Q^{\rm QW})$, with
\begin{enumerate}
\item $Q$ is a convex quadrilateral;
\item $P_Q^{\rm QW}=P_2(Q)$;
\item the components of $D_Q^{\rm QW}=\{d_i^{\rm QW}\}_{i=1:6}$ for any $v\in H^2(Q)$ are: 
$$
d^{\rm QW}_i(v)=v(a_i),\ a_i\ \mbox{the\ vertices\ of}\ T; \ \ d^{\rm QW}_5(v)=\int_Q\partial_{\xi\xi}v,\ d^{\rm QW}_6(v)=\int_Q\partial_{\eta\eta}v.
$$
\end{enumerate}

\begin{lemma}
The quadrilateral Wilson element is uni-solvent. 
\end{lemma}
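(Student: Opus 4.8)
The plan is to verify unisolvence directly, exploiting that $\dim P_Q^{\rm QW}=\dim P_2(Q)=6$ equals the number of prescribed functionals $\#D_Q^{\rm QW}=6$. Since the linear system is square, it suffices to establish injectivity, i.e. that $v\in P_2(Q)$ with $d_i^{\rm QW}(v)=0$ for $i=1:6$ forces $v\equiv0$. Writing $v=c_0+c_1\xi+c_2\eta+c_3\xi^2+c_4\xi\eta+c_5\eta^2$, I would first observe that the two interior functionals decouple cleanly: because $\partial_{\xi\xi}v=2c_3$ and $\partial_{\eta\eta}v=2c_5$ are constants, the conditions $d_5^{\rm QW}(v)=2c_3|Q|=0$ and $d_6^{\rm QW}(v)=2c_5|Q|=0$ give $c_3=c_5=0$ immediately, $|Q|$ being the (positive) area of $Q$. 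The essential point here is that the cross term $\xi\eta$ is annihilated by both $\partial_{\xi\xi}$ and $\partial_{\eta\eta}$, so it is invisible to these two functionals; thus they pin down exactly the $\xi^2$- and $\eta^2$-coefficients and nothing else, and the problem collapses onto the bilinear space $\mathrm{span}\{1,\xi,\eta,\xi\eta\}$.

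The remaining task is a statement of bilinear unisolvence: show that the only $v=c_0+c_1\xi+c_2\eta+c_4\xi\eta$ vanishing at the four vertices $a_1,\dots,a_4$ is $v\equiv0$. Using the vertex coordinates recorded in Section \ref{sec:pre}, namely $a_1(-1-\alpha,1-\beta)$, $a_2(-1+\alpha,-1+\beta)$, $a_3(1-\alpha,-1-\beta)$, $a_4(1+\alpha,1+\beta)$ in the $(\xi,\eta)$ frame, this amounts to checking that the coefficient matrix with $i$-th row $(1,\xi_i,\eta_i,\xi_i\eta_i)$,
\[
M=\begin{pmatrix}
1 & \xi_1 & \eta_1 & \xi_1\eta_1\\
1 & \xi_2 & \eta_2 & \xi_2\eta_2\\
1 & \xi_3 & \eta_3 & \xi_3\eta_3\\
1 & \xi_4 & \eta_4 & \xi_4\eta_4
\end{pmatrix},
\]
is nonsingular, where $(\xi_i,\eta_i)$ denote the coordinates of $a_i$.

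The main obstacle is the evaluation of $\det M$ together with the proof that it does not vanish. I would clear the first column by subtracting the first row from the other three and then evaluate the resulting $3\times3$ determinant; I expect the $\alpha,\beta$-dependence to simplify substantially once the symmetric combinations $\alpha-\beta$ and $\alpha+\beta$ are grouped, leaving $\det M$ equal to a positive constant multiple of $1-\alpha^2-\beta^2$. Nonvanishing then follows precisely from the convexity hypothesis: since $|\alpha|+|\beta|<1$ one has $\alpha^2+\beta^2\le(|\alpha|+|\beta|)^2<1$, whence $1-\alpha^2-\beta^2>0$ and $M$ is invertible. Geometrically this is the assertion that the zero set of a nonzero bilinear function — a rectangular hyperbola, or a degenerate cross of two lines, with asymptotes parallel to the $\r$- and $\s$-directions — cannot pass through all four vertices of a convex quadrilateral; the algebraic route through $\det M$ is the cleaner one, and it makes transparent that the bound $|\alpha|+|\beta|<1$ is exactly where convexity enters.
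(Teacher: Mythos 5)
Your argument is correct, and it is verifiably complete: the determinant you defer to computation does come out as $16(1-\alpha^2-\beta^2)$, so the convexity bound $|\alpha|+|\beta|<1$ closes the proof exactly as you predict. However, your route is genuinely different from the paper's. The paper proves unisolvence constructively, by writing down the six dual basis functions $\phi_1,\dots,\phi_6$ explicitly (in terms of $\xi$, $\eta$, $\alpha$, $\beta$) and checking $d_i^{\rm QW}(\phi_j)=\delta_{ij}$; this has the side benefit of producing formulas that can be used directly in implementation, but it offers no insight into why the element works and leaves the reader to verify a block of algebra. Your proof instead uses the dimension count $6=6$ to reduce unisolvence to injectivity, and the key structural observation — that $\partial_{\xi\xi}$ and $\partial_{\eta\eta}$ annihilate the cross term $\xi\eta$, so the two interior functionals isolate the coefficients of $\xi^2$ and $\eta^2$ and the problem collapses to bilinear interpolation at the four vertices — is exactly the right way to see where convexity enters: the residual $4\times4$ vertex system is nonsingular precisely because $1-\alpha^2-\beta^2>0$. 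If you write this up, replace the hedged phrase ``I expect the $\alpha,\beta$-dependence to simplify'' with the actual computation (subtract the first row, factor out $2$ from each of the three remaining rows, and expand the $3\times3$ determinant), since the nonvanishing of that determinant is the entire content of the lemma and cannot be left as an expectation.
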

\begin{proof}
Define

\begin{eqnarray}
\phi_4=-\frac{1}{8(\beta+1)}\left[-(\xi+\eta+1)^2+(\alpha+\beta-1)^2\right]
\\
\phi_3=\frac{1}{4(1-\alpha)}((\xi+1)^2-\alpha^2)-\frac{1+\alpha}{1-\alpha}\phi_4
\\
\phi_2=\frac{1}{2(\beta-1)}(\eta-(1-\beta))-\frac{1}{1-\beta}\phi_3-\frac{\beta}{\beta-1}\phi_4
\\
\phi_1=1-\phi_2-\phi_3-\phi_4
\\
\phi_5=(\alpha-\beta+1)\xi^2+2\alpha\xi+2\alpha\xi\eta+(\alpha+\beta-1)(1-\alpha^2)
\\
\phi_6=\xi^2-\eta^2+2\beta\xi-2\alpha\eta+\beta^2-\alpha^2.
\end{eqnarray}
It is easy to verify that $d^{\rm QW}_i(\phi_j)=\delta_{ij}$. The proof is completed. 
\end{proof}
Given a quadrilateral subdivision $\mathcal{G}_h$, define the quadrilateral Wilson element space by 
$$
V^{\rm QW}_h:=\{w\in L^2(\Omega):w|_Q\in P_2(Q),\ \forall\,Q\in\mathcal{G}_h, \ w\ \mbox{is\ continuous\ at}\ a\in\mathcal{N}_h\},
$$ 
and, associated with $H^1_0(\Omega)$, 
$$
V^{\rm QW}_{h0}:=\{w\in V^{\rm QW}_h:\ w(a)=0\ \mbox{at}\ a\in\mathcal{N}^b_h\}.
$$

\section{A minimal consistent finite element space for biharmonic equation}
\label{sec:min}

In this section, we study the discretization of biharmonic equation on quadrilateral grids, and present a consistent finite element method with piecewise quadratic polynomials. Consider the model problem: 
\begin{equation}
\left\{
\begin{array}{ll}
\Delta^2u=f&\mbox{in}\ \Omega
\\
u=\frac{\partial u}{\partial\mathbf{n}}=0&\mbox{on}\ \partial\Omega.
\end{array}
\right.
\end{equation}
The variational problem is, with $f\in L^2(\Omega)$,
\begin{equation}\label{eq:model}
\mbox{find}\ u\in H^2_0(\Omega),\ \mbox{such\ that}\ (\nabla^2u,\nabla^2v)=(f,v),\ \forall\,v\in H^2_0(\Omega). 
\end{equation}
Let $\mathcal{G}_h$ be a quadrilateral subdivision of $\Omega$ and define piecewise quadratic element spaces on $\mathcal{G}_h$ by
\begin{itemize}
\item reduced quadrilateral Morley (RQM for short) finite element space
\begin{multline}
\qquad V^{\rm RQM}_h:=\{v_h\in L^2(\Omega):v_h|_Q\in P_2(Q),\ \forall\,Q\in\mathcal{G}_h,\ v_h(a)\ \mbox{is\ continuous\ on}\ a\in\mathcal{X}_h,
\\
\mbox{and}\ \int_e\frac{\partial v_h}{\partial n_e}\ \mbox{is\ continuous\ along}\ e\in\mathcal{E}^i_h\};\qquad
\end{multline}
\item homogeneous RQM finite element space:
\begin{equation}
V^{\rm RQM}_{h0}:=\{v_h\in V^{\rm RQM}_{h}:v_h(a)=0\ \mbox{on}\ a\in \mathcal{X}_h\setminus\mathcal{X}_h^i,\ \int_e\frac{\partial v_h}{\partial n_e}=0\ \mbox{on}\ e\in\mathcal{E}_h\setminus\mathcal{E}^i_h\}.
\end{equation}
\end{itemize}
Consider the finite element problem for \eqref{eq:model}: 
\begin{equation}\label{eq:modeldis}
\mbox{find}\ u_h\in V^{\rm RQM}_{h0},\ \mbox{such\ that}\ \sum_{K\in\mathcal{G}_h}(\nabla^2u_h,\nabla^2v)=(f,v),\ \forall\,v\in V^{\rm RQM}_{h0}. 
\end{equation}
As $V^{\rm RQM}_{h0}\subset V^{\rm QM}_{h0}$, The well-posed-ness of \eqref{eq:modeldis} follows by \cite{Park.C;Sheen.D2013}.

The main result of this paper is the theorem below.
\begin{theorem}\label{thm:main}
Let $u$ and $u_h$ be the solutions of \eqref{eq:model} and \eqref{eq:modeldis}, respectively. If $u\in H^3(\Omega)$, then
\begin{equation} 
|u-u_h|_{2,h}\leqslant Ch(|u|_{3,\Omega}+h\|f\|_{0,\Omega}).
\end{equation}
\end{theorem}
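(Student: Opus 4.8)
The plan is to apply the second Strang lemma to the nonconforming scheme \eqref{eq:modeldis} and to treat the two resulting contributions through the exact correspondence between $V^{\rm RQM}_{h0}$ and the Park--Sheen space. The starting observation is that for any $w\in H^2(Q)$ the Hessian and the gradient of the curl coincide in Frobenius norm pointwise, $|\nabla^2 w|=|\nabla\curl w|$, so the broken energy seminorm satisfies $|w_h|_{2,h}=|\curl_h w_h|_{1,h}$ and the broken form factors as $\sum_K(\nabla^2 u,\nabla^2 w_h)_K=(\nabla_h\curl u,\nabla_h\curl_h w_h)$. Writing $\uu:=\curl u$, which lies in $\mathring\uH{}^1_0(\Omega)\cap\uH{}^2(\Omega)$ since $u\in H^2_0(\Omega)\cap H^3(\Omega)$, Strang's lemma yields
\begin{equation*}
|u-u_h|_{2,h}\leqslant C\Big(\inf_{v_h\in V^{\rm RQM}_{h0}}|u-v_h|_{2,h}+\sup_{0\neq w_h\in V^{\rm RQM}_{h0}}\frac{|\sum_K(\nabla^2u,\nabla^2w_h)_K-(f,w_h)|}{|w_h|_{2,h}}\Big).
\end{equation*}

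For the approximation term I would invoke the structural identity $\curl_h V^{\rm RQM}_{h0}=\mathring{\uV}{}^{\rm PS}_{h0}$, the quadrilateral analogue of Morley--Crouzeix--Raviart exactness and the crux of the paper; combined with the norm identity it turns the best biharmonic approximation into a divergence-free Park--Sheen approximation of $\uu$,
\begin{equation*}
\inf_{v_h\in V^{\rm RQM}_{h0}}|u-v_h|_{2,h}=\inf_{\uv{}_h\in\mathring{\uV}{}^{\rm PS}_{h0}}|\uu-\uv{}_h|_{1,h}.
\end{equation*}
I would realize a competitor by first taking $\uzeta{}_h\in\mathring{\uV}{}^{\rm le}_{h0}$ with $|\uu-\uzeta{}_h|_{1,h}\leqslant Ch|\uu|_{2,\Omega}$, available from Lemma \ref{lem:ledf} and the remark after it (this is where {\bf Hypothesis G} and the $P_1$--$P_0$ Stokes estimate enter), and then applying $\undertilde{\rm I}{}^{\rm QGB}_h$. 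Since $\undertilde{\rm I}{}^{\rm QGB}_h\mathring{\uV}{}^{\rm le}_{h0}=\mathring{\uV}{}^{\rm PS}_{h0}$ and $\undertilde{\rm I}{}^{\rm QGB}_h$ is $H^1$-stable and first-order accurate by Lemma \ref{lem:propqgb}, a triangle inequality through $\undertilde{\rm I}{}^{\rm QGB}_h\uu$ gives $|\uu-\undertilde{\rm I}{}^{\rm QGB}_h\uzeta{}_h|_{1,h}\leqslant Ch|\uu|_{2,\Omega}\leqslant Ch|u|_{3,\Omega}$, bounding the approximation term by $Ch|u|_{3,\Omega}$.

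For the consistency term I would introduce a conforming companion $w_h\mapsto w_h^c\in H^2_0(\Omega)$ with $|w_h-w_h^c|_{2,h}\leqslant C|w_h|_{2,h}$ and $\|w_h-w_h^c\|_{0,\Omega}\leqslant Ch^2|w_h|_{2,h}$; one natural construction regularizes $\curl_h w_h\in\mathring{\uV}{}^{\rm PS}_{h0}$ into a conforming divergence-free field and takes its stream function, keeping everything inside the Park--Sheen framework. Because $a(u,w_h^c)=(f,w_h^c)$, the consistency functional splits as
\begin{equation*}
\sum_K(\nabla^2u,\nabla^2w_h)_K-(f,w_h)=\sum_K(\nabla^2u,\nabla^2(w_h-w_h^c))_K+(f,w_h^c-w_h).
\end{equation*}
The second term is at once $\leqslant Ch^2\|f\|_{0,\Omega}|w_h|_{2,h}$, producing the $h^2\|f\|_{0,\Omega}$ contribution. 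In the first term I subtract the piecewise-constant $L^2$-projection $\overline{\nabla^2u}$ of the Hessian: the part $(\nabla^2u-\overline{\nabla^2u},\nabla^2(w_h-w_h^c))$ is $\leqslant Ch|u|_{3,\Omega}|w_h|_{2,h}$ by first-order approximation of $\nabla^2u$ by constants, while the piecewise-constant part reduces, after a cellwise integration by parts, to edge jumps of $\nabla w_h$ (those of $w_h^c$ cancel by conformity) tested against constants, which the continuity of the vertex values and of the edge averages $\fint_e\partial_{\mathbf{n}_e}w_h$ of $V^{\rm RQM}_{h0}$ makes $O(h)|u|_{3,\Omega}|w_h|_{2,h}$ once the mean of $\usigma$ is subtracted on each edge.

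The main obstacle is the consistency term, and specifically the right-hand side. With only $u\in H^3(\Omega)$ the third derivatives of $u$ have no edge traces, so one cannot integrate by parts a second time to reach $(f,w_h)$ directly; the companion device sidesteps this, but it must deliver the $L^2$ super-closeness $\|w_h-w_h^c\|_{0,\Omega}\leqslant Ch^2|w_h|_{2,h}$ in order to extract the correct weight $h^2\|f\|_{0,\Omega}$ rather than a term demanding $H^4$ regularity. Verifying that the weak continuity of $V^{\rm RQM}_{h0}$ indeed renders the residual edge jumps orthogonal to constants — for the normal part through the continuous edge-mean normal derivative, and for the tangential part through an edge integration by parts using the vanishing of $\llbracket w_h\rrbracket$ at the vertices — is the delicate bookkeeping; the remaining edge and volume estimates are then routine.
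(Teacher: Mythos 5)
Your treatment of the approximation term is essentially the paper's own argument: the identity $|w|_{2,h}=|\curl_h w|_{1,h}$, the exactness relation $\curl_h V^{\rm RQM}_{h0}=\mathring{\uV}{}^{\rm PS}_{h0}$, the competitor $\undertilde{\rm I}{}^{\rm QGB}_h\undertilde{\rm P}{}^{\rm le}_{h0}(\curl u)$ built from the $P_1$--$P_0$ Stokes projection under {\bf Hypothesis G}, and the triangle inequality through $\undertilde{\rm I}{}^{\rm QGB}_h(\curl u)$ --- this is exactly Lemma \ref{lem:approxvfps} and Theorem \ref{thm:appRQM}, and it is fine.

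The consistency term is where you depart from the paper, and where there is a genuine gap. Your entire estimate rests on a companion $w_h^c\in H^2_0(\Omega)$ satisfying both $|w_h-w_h^c|_{2,h}\leqslant C|w_h|_{2,h}$ and the $L^2$ super-closeness $\|w_h-w_h^c\|_{0,\Omega}\leqslant Ch^2|w_h|_{2,h}$. You assert its existence but do not construct it, and for this newly defined space on general quadrilateral grids no such operator is available off the shelf; it is not a routine step. Your sketch --- regularize $\curl_h w_h\in\mathring{\uV}{}^{\rm PS}_{h0}$ into a conforming divergence-free field and take its stream function --- is particularly problematic for the $L^2$ bound: controlling $\|w_h-w_h^c\|_{0,\Omega}$ means controlling the stream function itself and not merely its curl, which requires fixing normalizations and a separate duality or Poincar\'e-type argument that you do not supply; moreover the regularized field must remain exactly divergence-free for a stream function in $H^2_0(\Omega)$ to exist at all. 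The paper avoids needing any $H^2_0$-conforming object: it takes only the $H^1_0$-conforming nodal interpolant $I_hw_h$ on the criss-cross refinement $\mathcal{T}_h$, uses one integration by parts at the continuous level to write $(f,I_hw_h)=-\int_\Omega\nabla\Delta u\cdot\nabla I_hw_h$ (legitimate since $u\in H^3(\Omega)$ gives $\nabla\Delta u\in\undertilde{L}^2(\Omega)$), and is then left with volume terms in $w_h-I_hw_h$ plus the standard Morley edge terms, which are bounded by $Ch\|u\|_{3,\Omega}|w_h|_{2,h}$ by citing the quadrilateral Morley analysis of Park and Sheen, valid here because $V^{\rm RQM}_{h0}\subset V^{\rm QM}_{h0}$. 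Your edge-jump bookkeeping for the piecewise-constant part of the Hessian is the standard weak-continuity argument and would go through, but as written the proof is incomplete until the companion operator is actually produced; alternatively, you could simply replace that device by the paper's $H^1_0$-interpolant-plus-Green-formula route, which needs nothing beyond what is already established.
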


We postpone the proof of Theorem \ref{thm:main} after some technical lemmas. 

\subsection{Approximation of $V^{\rm RQM}_{h0}$}

\subsubsection{Approximation of Park-Sheen space revisited}

Recall $\ruvps_{h0}:=\{\uw{}_h\in (V^{\rm PS}_{h0})^2:\dv\uw{}_h|_K=0,\ \ \mbox{on}\, K\in \mathcal{G}_h\}.$  We are going to construct the approximation result below. 
\begin{lemma}\label{lem:approxvfps}
Provided {\bf Hypothesis G} for $\mathcal{G}_h$, and let $\uw\in\mathring{\uH}{}^1_0(\Omega)\cap \uH{}^2(\Omega)$. Then
\begin{equation}
\inf_{\uv{}_h\in\ruvps_{h0}}\|\uw-\uv{}_h\|_{1,h}\leqslant Ch\|\uw\|_{2,\Omega}. 
\end{equation}
\end{lemma}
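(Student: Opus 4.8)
The plan is to prove Lemma~\ref{lem:approxvfps} by reducing the approximation problem for the divergence-free Park--Sheen space $\ruvps_{h0}$ on the quadrilateral grid $\mathcal{G}_h$ to the already-established approximation result for the divergence-free linear element space $\mathring{\uV}{}^{\rm le}_{h0}$ on the criss-cross triangulation $\mathcal{T}_h$. The key tool is the interpolator $\undertilde{\rm I}{}^{\rm QGB}_h$ of the quadrilateral generalized bilinear element. By Item~2 of Lemma~\ref{lem:propqgb}, this interpolator maps $\mathring{\uV}{}^{\rm le}_{h0}(\mathcal{T}_h)$ exactly onto $\ruvps_{h0}(\mathcal{G}_h)$, and by the stability lemma it is $H^1$-stable cell by cell. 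This is precisely the bridge that lets one transport approximation estimates from the linear-element setting to the Park--Sheen setting.

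Concretely, I would proceed as follows. Given $\uw\in\mathring{\uH}{}^1_0(\Omega)\cap\uH{}^2(\Omega)$, first invoke the Remark following Lemma~\ref{lem:ledf}, which (under \textbf{Hypothesis G}) provides some $\uv{}_h^{\rm le}\in\mathring{\uV}{}^{\rm le}_{h0}(\mathcal{T}_h)$ with
\begin{equation*}
\|\uw-\uv{}_h^{\rm le}\|_{1,h}\leqslant Ch\|\uw\|_{2,\Omega}.
\end{equation*}
Then set $\uv{}_h:=\undertilde{\rm I}{}^{\rm QGB}_h\uv{}_h^{\rm le}\in\ruvps_{h0}(\mathcal{G}_h)$, which is admissible by Lemma~\ref{lem:propqgb}(2). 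To estimate $\|\uw-\uv{}_h\|_{1,h}$, I would insert the intermediate term $\undertilde{\rm I}{}^{\rm QGB}_h\uw$ and split via the triangle inequality into $\|\uw-\undertilde{\rm I}{}^{\rm QGB}_h\uw\|_{1,h}$, controlled by Lemma~\ref{lem:propqgb}(1) by $Ch|\uw|_{2,\Omega}$, and $\|\undertilde{\rm I}{}^{\rm QGB}_h(\uw-\uv{}_h^{\rm le})\|_{1,h}$, controlled by the $H^1$-stability of $\undertilde{\rm I}{}^{\rm QGB}_h$ and then by the linear-element estimate above. Summing the cell contributions yields the claimed bound.

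The main obstacle I anticipate is the bookkeeping at the interface between the two grids, specifically ensuring that the divergence-free constraint is preserved under the interpolation and that the homogeneous boundary conditions match up. The exactness statement $\undertilde{\rm I}{}^{\rm QGB}_h\mathring{\uV}{}^{\rm le}_{h0}(\mathcal{T}_h)=\ruvps_{h0}(\mathcal{G}_h)$ already encodes that $\undertilde{\rm I}{}^{\rm QGB}_h$ carries divergence-free linear fields to divergence-free Park--Sheen fields (using $\int_K\dv\undertilde{\rm I}{}^{\rm QGB}_h\uw{}_h=\int_K\uw{}_h$), so the constraint is handled, but I would double-check that the stability estimate, which is stated for the scalar interpolator ${\rm I}^{\rm QGB}_Q$ on a single cell, transfers cleanly to the vector interpolator $\undertilde{\rm I}{}^{\rm QGB}_h$ over all of $\mathcal{G}_h$ with a constant depending only on the shape regularity of the grid. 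A subtlety worth verifying is that $\uv{}_h^{\rm le}$ being genuinely divergence-free (not merely discretely so) is what guarantees $\uv{}_h\in\ruvps_{h0}$; this is where \textbf{Hypothesis G} and the criss-cross structure play their essential role, since that structure is exactly what makes Lemma~\ref{lem:ledf} available.
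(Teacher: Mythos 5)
Your proposal is correct and follows essentially the same route as the paper: both arguments pick a divergence-free linear-element approximant on the criss-cross triangulation (the paper uses the elliptic projection $\undertilde{\rm P}{}^{\rm le}_{h0}\uw$, you use a near-infimizer from the remark after Lemma~\ref{lem:ledf}, which amounts to the same thing), push it into $\ruvps_{h0}$ via $\undertilde{\rm I}{}^{\rm QGB}_h$ using Lemma~\ref{lem:propqgb}(2), and conclude by the triangle inequality through $\undertilde{\rm I}{}^{\rm QGB}_h\uw$ together with Lemma~\ref{lem:propqgb}(1) and the $H^1$-stability of the interpolator. The subtleties you flag (preservation of the divergence-free constraint and componentwise transfer of the scalar stability bound) are exactly the points the paper relies on implicitly.
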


\begin{proof}
Let $\mathcal{T}_h$ be a triangulation of $\Omega$ made by a criss-cross refinement of $\mathcal{G}_h$. Define $\undertilde{\rm P}{}^{\rm le}_{h0}:\mathring{\uH}{}^1_0(\Omega)\to \mathring \uV{}^{\rm le}_{h0}$ such that 
\begin{equation}
(\nabla \undertilde{\rm P}{}^{\rm le}_{h0}\uu,\nabla \uv{}_h)=(\nabla \uu,\nabla\uv{}_h),\quad \uu\in \mathring{\uH}{}^1_0(\Omega),\ \forall\,\uv{}_h\in \mathring\uV{}^{\rm le}_{h0}.
\end{equation}
By Lemma \ref{lem:ledf}, the operator  $\undertilde{\rm P}{}^{\rm le}_{h0}$ is well defined, and 
\begin{equation}
\|\uu-\undertilde{\rm P}{}^{\rm le}_{h0}\uu\|_{1,\Omega}\leqslant Ch\|\uu\|_{2,\Omega},\ \ \mbox{for}\ \uu\in \mathring{\uH}{}^1_0(\Omega)\cap \undertilde{H}^2(\Omega).
\end{equation}
Therefore, by Lemma \ref{lem:propqgb}, given $\uu\in\mathring{\uH}{}^1_0(\Omega)\cap \uH{}^2(\Omega)$,
\begin{multline*}
\inf_{\uv{}_h\in\ruvps_{h0}}|\uu-\uv{}_h|_{1,h}
\leqslant |\uu-\undertilde{\rm I}{}^{\rm QGB}_{h}\undertilde{\rm P}{}^{\rm le}_{h0}\uu|_{1,\Omega}
\leqslant |\uu-\undertilde{\rm I}\,{}^{\rm QGB}_{h}\uu|_{1,\Omega}+ |\undertilde{\rm I}\,{}^{\rm QGB}_{h}\uu-\undertilde{\rm I}\,{}^{\rm QGB}_{h}\undertilde{\rm P}{}^{\rm le}_{h0}\uu|_{1,\Omega}
\\
\leqslant |\uu-\undertilde{\rm I}\,{}^{\rm QGB}_{h}\uu|_{1,\Omega}+ C|\uu-\undertilde{\rm P}{}^{\rm le}_{h0}\uu|_{1,\Omega}\leqslant Ch|\uu|_{2,\Omega}.
\end{multline*}
This completes the proof. 
\end{proof}

\subsubsection{Approximation of RQM space}

\begin{lemma}
$\curl V^{\rm RQM}_{h0}=\mathring{\uV}{}^{\rm PS}_{h0}$.
\end{lemma}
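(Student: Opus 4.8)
The plan is to derive both inclusions directly from Lemma~\ref{lem:esqmltz}, which already gives $\curl_h V^{\rm QM}_{h0}=\mathring{\uV}{}^{\rm QLTZ}_{h0}$, by exploiting that $V^{\rm RQM}_{h0}$ and $\mathring{\uV}{}^{\rm PS}_{h0}$ are exactly the \emph{reduced} subspaces obtained from $V^{\rm QM}_{h0}$ and $\mathring{\uV}{}^{\rm QLTZ}_{h0}$ by discarding, cellwise, the bubble spans $\{\xi^3,\eta^3\}$ and $\{\xi^2,\eta^2\}$, respectively. The inclusions $V^{\rm RQM}_{h0}\subset V^{\rm QM}_{h0}$ and $\mathring{\uV}{}^{\rm PS}_{h0}\subset\mathring{\uV}{}^{\rm QLTZ}_{h0}$ hold since $P_2(Q)\subset P_Q^{\rm QM}$ and $P_1(Q)\subset P_Q^{\rm QLTZ}$ with identical continuity and boundary conditions. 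The key cellwise fact is that $\curl=\nabla^\perp$ carries the cubic bubbles precisely onto the quadratic ones: because $\xi,\eta$ are affine, $\curl(\xi^3)=3\xi^2\,\curl\xi$ and $\curl(\eta^3)=3\eta^2\,\curl\eta$, where $\curl\xi$ and $\curl\eta$ are constant vectors that are linearly independent (the affine functions $\xi,\eta$ form a coordinate system, so their gradients, hence their rotations, are independent). Thus $\curl\bigl(P_2(Q)\bigr)\subset (P_1(Q))^2$, whereas $\curl$ of any nonzero element of $\mathrm{span}\{\xi^3,\eta^3\}$ is a genuinely quadratic vector field.

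For the inclusion $\subseteq$, I would take $v_h\in V^{\rm RQM}_{h0}\subset V^{\rm QM}_{h0}$ and invoke Lemma~\ref{lem:esqmltz} to get $\curl v_h\in\mathring{\uV}{}^{\rm QLTZ}_{h0}$; in particular $\curl v_h$ has continuous edge averages, vanishing edge averages on $\partial\Omega$, and $\dv_h\curl v_h=0$. Since $v_h|_Q\in P_2(Q)$ forces $\curl v_h|_Q\in (P_1(Q))^2$, the field $\curl v_h$ is piecewise $P_1$ and therefore, by the defining conditions, already an element of $\mathring{\uV}{}^{\rm PS}_{h0}$. This yields $\curl V^{\rm RQM}_{h0}\subset\mathring{\uV}{}^{\rm PS}_{h0}$.

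For the reverse inclusion, I would start from $\uw_h\in\mathring{\uV}{}^{\rm PS}_{h0}\subset\mathring{\uV}{}^{\rm QLTZ}_{h0}=\curl V^{\rm QM}_{h0}$ and choose a preimage $v_h\in V^{\rm QM}_{h0}$ with $\curl v_h=\uw_h$, writing $v_h|_Q=p_Q+c_1\xi^3+c_2\eta^3$ with $p_Q\in P_2(Q)$. Then $\curl v_h|_Q=\curl p_Q+3c_1\xi^2\curl\xi+3c_2\eta^2\curl\eta$, whose purely quadratic part is $3c_1\xi^2\curl\xi+3c_2\eta^2\curl\eta$. Since $\uw_h|_Q\in (P_1(Q))^2$ this quadratic part must vanish, and the linear independence of $\curl\xi,\curl\eta$ together with that of the monomials $\xi^2,\eta^2$ forces $c_1=c_2=0$ on every cell. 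Hence $v_h$ is piecewise $P_2$, so $v_h\in V^{\rm RQM}_{h0}$ and $\uw_h\in\curl V^{\rm RQM}_{h0}$.

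I expect the reverse inclusion to be the substantive step, and within it the cellwise bookkeeping is the delicate point: one must confirm that $\curl$ sends the cubic bubble span \emph{injectively} into the genuinely quadratic ``extra'' directions of the QLTZ local space, and that these directions are absent from the piecewise-$P_1$ target, which is precisely what forces the cubic coefficients to die. Everything else reduces to matching the degrees of freedom and the edge/vertex continuity conditions of the reduced spaces against those of the full QM and QLTZ spaces, which is routine.
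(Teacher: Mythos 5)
Your proposal is correct and follows essentially the same route as the paper: the forward inclusion is immediate from $\curl(P_2)\subset (P_1)^2$ together with the matching continuity/boundary conditions, and the reverse inclusion lifts $\uw{}_h\in\mathring{\uV}{}^{\rm PS}_{h0}\subset\mathring{\uV}{}^{\rm QLTZ}_{h0}$ to a preimage in $V^{\rm QM}_{h0}$ via Lemma~\ref{lem:esqmltz} and then argues that the cubic bubble coefficients must vanish because the curl is piecewise linear. The paper compresses the bubble-killing step into the single observation ``$\curl(\varphi_h|_K)\in (P_1(K))^2$ implies $\varphi_h|_K\in P_2(K)$,'' which your explicit computation with $\curl(\xi^3)=3\xi^2\curl\xi$ and the linear independence of $\curl\xi,\curl\eta$ correctly justifies.
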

\begin{proof}
Evidently, $\curl V^{\rm RQM}_{h0}\subset \mathring{\uV}{}^{\rm PS}_{h0}$. On the other hand, given $\uw{}_h\in \mathring{\uV}{}^{\rm PS}_{h0}\subset \mathring{\uV}{}^{\rm QLTZ}_{h0}$, by Lemma \ref{lem:esqmltz}, there exists a $\varphi{}_h\in V^{\rm QM}_{h0}$, such that $\curl_h\varphi{}_h=\uw{}_h$. Note that $\curl\,(\varphi_h|_K)\in (P_1(K))^2$ for any $K$, and thus $\varphi_h|_K\in P^2(K)$ on any $K\subset\mathcal{G}_h$, and we obtain $\varphi_h\in V^{\rm RQM}_{h0}$ and thus $\curl V^{\rm RQM}_{h0}\supset \mathring \uV{}^{\rm PS}_{h0}$. The proof is completed. 
\end{proof}

\begin{theorem}\label{thm:appRQM}
Provided {\bf Hypothesis G} for $\mathcal{G}_h$, and let $u\in H^2_0(\Omega)\cap H^3(\Omega)$. Then
\begin{equation}\label{eq:enae}
\inf_{v_h\in V^{\rm RQM}_{h0}}|u-v{}_h|_{2,h}\leqslant Ch|u|_{3,\Omega}. 
\end{equation}
\end{theorem}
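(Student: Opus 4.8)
The plan is to reduce the energy-norm approximation of the scalar $u$ to the $H^1$-seminorm approximation of the vector field $\curl u$, using the surjectivity $\curl V^{\rm RQM}_{h0}=\mathring{\uV}{}^{\rm PS}_{h0}$ just proved. The bridge is the pointwise identity, valid for any $\varphi\in H^2$ on a single cell,
\[
|\nabla^2\varphi|=|\nabla(\curl\varphi)|,
\]
which holds because in two dimensions $\nabla(\curl\varphi)$ is the Hessian $\nabla^2\varphi$ with its columns rotated by a right angle, an orthogonal operation that preserves the Frobenius norm. Applying this cell by cell and summing yields $|w|_{2,h}=|\curl_h w|_{1,h}$ for every piecewise-$H^2$ function $w$; since $u\in H^3(\Omega)$ makes $\curl u$ globally defined, this gives
\[
|u-v_h|_{2,h}=|\curl u-\curl_h v_h|_{1,h},\qquad\forall\,v_h\in V^{\rm RQM}_{h0}.
\]

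First I would settle the regularity and boundary behaviour of $\curl u$. Since $u\in H^2_0(\Omega)$, both components of $\nabla u$, hence of $\curl u$, have vanishing trace, so $\curl u\in\uH{}^1_0(\Omega)$; moreover $\dv\,\curl u=0$, so $\curl u\in\mathring{\uH}{}^1_0(\Omega)$, and $u\in H^3(\Omega)$ gives $\curl u\in\uH{}^2(\Omega)$ together with $|\curl u|_{2,\Omega}\leqslant C|u|_{3,\Omega}$.

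The reduction then runs as follows. Because $\curl$ carries $V^{\rm RQM}_{h0}$ onto $\mathring{\uV}{}^{\rm PS}_{h0}$, the set of images $\{\curl_h v_h:v_h\in V^{\rm RQM}_{h0}\}$ is exactly $\mathring{\uV}{}^{\rm PS}_{h0}$, whence
\[
\inf_{v_h\in V^{\rm RQM}_{h0}}|u-v_h|_{2,h}=\inf_{v_h\in V^{\rm RQM}_{h0}}|\curl u-\curl_h v_h|_{1,h}=\inf_{\uw{}_h\in\mathring{\uV}{}^{\rm PS}_{h0}}|\curl u-\uw{}_h|_{1,h}.
\]
Applying Lemma \ref{lem:approxvfps} to $\uw=\curl u\in\mathring{\uH}{}^1_0(\Omega)\cap\uH{}^2(\Omega)$ bounds the last quantity by $Ch|\curl u|_{2,\Omega}\leqslant Ch|u|_{3,\Omega}$, which is the claim.

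The main obstacle I anticipate is the bookkeeping of seminorms against full norms. The clean target $Ch|u|_{3,\Omega}$ forces me to keep only the top-order seminorm $|\curl u|_{2,\Omega}$ on the right, so I must invoke Lemma \ref{lem:approxvfps} in the seminorm form that its proof actually establishes (its closing estimate is $Ch|\uu|_{2,\Omega}$, built from the seminorm interpolation bound in item 1 of Lemma \ref{lem:propqgb}), rather than the full-norm form literally stated. A secondary point needing care is confirming that each $\curl_h v_h$ genuinely lies in $\mathring{\uV}{}^{\rm PS}_{h0}$ — piecewise affine, divergence-free, with the prescribed boundary edge-averages — but this is precisely the content of the identity $\curl V^{\rm RQM}_{h0}=\mathring{\uV}{}^{\rm PS}_{h0}$, so nothing beyond citing it is required.
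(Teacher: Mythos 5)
Your argument is exactly the paper's: reduce $|u-v_h|_{2,h}$ to $|\curl u-\curl_h v_h|_{1,h}$ via the norm-preserving rotation of the Hessian, use the identity $\curl V^{\rm RQM}_{h0}=\mathring{\uV}{}^{\rm PS}_{h0}$ to rewrite the infimum over $\mathring{\uV}{}^{\rm PS}_{h0}$, and conclude with Lemma \ref{lem:approxvfps} applied to $\curl u\in\mathring{\uH}{}^1_0(\Omega)\cap\uH{}^2(\Omega)$. The extra care you take about seminorm versus full norm is a fair observation but does not change the route; the proposal is correct and matches the paper's proof.
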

\begin{proof}
Given $u\in H^2_0(\Omega)\cap H^3(\Omega)$, $\curl u\in \mathring{\uH}{}^1_0(\Omega)\cap \uH^2(\Omega)$. Thus
$$
\inf_{v_h\in V^{\rm RQM}_{h0}}|u-v{}_h|_{2,h} = \inf_{v_h\in V^{\rm RQM}_{h0}}|\curl u-\curl_hv{}_h|_{1,h} =\inf_{\uv{}_h\in\ruvps_{h0}}|\curl u-\uv{}_h|_{1,h}\leqslant Ch|\curl u|_{2,\Omega}= Ch|u|_{3,\Omega}. 
$$
This completes the proof.  
\end{proof}

\subsection{Proof of Theorem \ref{thm:main}}
The proof of  Theorem \ref{thm:main} is similar to the analysis in \cite{Shi.Z1990} and \cite{Park.C;Sheen.D2013}. By the second Strang Lemma, we have
\begin{equation}
\|u-u_h\|_{2,h}\leqslant C\bigg(\inf\limits_{v_h\in V^{\rm RQM}_{h0}}\|u-u_h\|_{2,h}+\sup\limits_{0\neq w_h\in V^{\rm RQM}_{h0}}\frac{|(\nabla^2u,\nabla_h^2w_h)-(f,w_h)|}{\|w_h\|_{2,h}}\bigg),
\end{equation}
where the first term is the approximation error and the second one is the consistency error. 

Let $\mathcal{T}_h$ be a triangulation of $\Omega$ generated by a criss-cross refinement of $\mathcal{G}_h$, and $V^{\rm le}_{h0}(\mathcal{T}_h)$ be the homogeneous linear element space on $\mathcal{T}_h$. Denote by $I_h$ the nodal interpolation of $V^{\rm le}_{h0}$. Then by Green formula,
\begin{equation}
(f,I_h w_h)=(\Delta^2 u,I_h w_h)=-\int_\Omega \nabla\Delta u\cdot\nabla I_h w_h. \label{eq9}
\end{equation}

The integration by parts yields
\begin{equation}
\begin{split}
(\nabla^2u,\nabla^2_hu_h)-(f,w_h)& =-\sum_{K\in \mathcal{T}_h}\int_K\nabla\Delta u\cdot\nabla(w_h-I_h w_h)-\sum_{K\in \mathcal{T}_h}\int_Kf(w_h-I_h w_h)
\\
&+\sum_{K\in \mathcal{T}_h}\int_{\partial K}\frac{\partial^2 u}{\partial n^2}\frac{\partial w_h}{\partial n}\;\mathrm{d}s+\sum_{K\in \mathcal{T}_h}\int_{\partial K}\frac{\partial ^2u}{\partial s\partial n}\frac{\partial w_h}{\partial s}\;\mathrm{d}s.\label{eq11}
\end{split}
\end{equation}
where $\frac{\partial}{\partial s}$ and $\frac{\partial}{\partial n}$ are tangential and normal derivatives along element boundaries, respectively.
The Cauchy-Schwarz inequality and the standard interpolation error estimate  lead to
\begin{equation}
\bigg|\sum_{K\in\mathcal{T}_h}\int_Kf(w_h-I_h w_h)\;\mathrm{d}x_1\mathrm{d}x_2\bigg|\leq Ch^2||f||_{L^2(\Omega)}|w_h|_h. \label{eq14}
\end{equation}
and
$$
(f,I_h w_h-w_h)\leqslant Ch^2\|f\|_{0,\Omega}\|w_h\|_{2,h}.
$$

Again, as $V^{\rm RQM}_{h0}\subset V^{\rm QM}_{h0}$, it holds by  \cite{Park.C;Sheen.D2013} that 
\begin{equation}
\sum_{K\in \mathcal{T}_h}\int_{\partial K}\frac{\partial^2 u}{\partial n^2}\frac{\partial w_h}{\partial n}\;\mathrm{d}s+\sum_{K\in \mathcal{T}_h}\int_{\partial K}\frac{\partial ^2u}{\partial s\partial n}\frac{\partial w_h}{\partial s}\;\mathrm{d}s\leqslant Ch\|u\|_{3,\Omega}|w_h|_{2,h}.
\end{equation}
and finally
\begin{equation}
|(\nabla^2u,\nabla^2_hv_h)-(f,v_h)|\leqslant Ch|v_h|_{2,h}(|u|_{3,\Omega}+h\|f\|_{0,\Omega}),\ \ \forall\,v_h\in V^{\rm RQM}_{h0}.
\end{equation}
For approximation error, we refer to Theorem \ref{thm:appRQM}. The proof is completed. 
\begin{remark}
Based on the exact relation between $V^{\rm RQM}_{h0}$ and $\mathring{V}^{\rm PS}_{h0}$, the consistency error estimate can also be established based on the techniques in \cite{Altmann.R;Carstensen.C2012}.
\end{remark}

%

%
%
%
\section{On the implementation of the finite element scheme}
\label{sec:impl}

It is evident that, the restrictions of the continuity of the RQM element function across internal edges are more than necessary to shape a quadratic polynomial on a quadrilateral. For special cases, such as for a rectangular grid on a rectangle domain, a linearly independent set of basis functions of the space can be given, while in general, the RQM element space may not be easy to be constructed by figuring out local basis functions. In this section, we present an explicit description of the basis functions on rectangle grids, and present an alternative approach how \eqref{eq:modeldis} can be implemented for general quadrilateral subdivision. 

\subsection{Local basis functions of the RQM element space on rectangular subdivisions}
\label{sec:hsofRQM} 

In this part, we consider the case that the domain can be covered by a rectangular subdivision. Namely, let $\Omega\subset\mathbb{R}^2$ be a rectangle. For a subdivision of any domain $\omega$, again, we use $\mathcal{E}_h$, $\mathcal{E}_h^i$, $\mathcal{X}_h$ and $\mathcal{X}_h^i$ for the set of faces, interior faces, edges, interior edges, vertices and interior vertices, respectively. For any edge $e\in\mathcal{E}_h$, denote by $\undertilde{t}{}_e$ the unit tangential vector along $e$. Particularly, if none of the vertices of a cell $K$ is on the boundary of $\omega$, we name this cell an {\it interior} cell. We use $\mathcal{K}^i_h$ for the set of interior cells. We use the symbol $``\#"$ for the cardinal of a set. Let $\mathcal{G}_h$ be a shape regular rectangle subdivision of $\Omega$.

\begin{lemma}
Let $\omega$ be a rectangle, and $\mathcal{T}_\omega$ be a $3\times 3$ subdivision of $\omega$.  Let $V^{\rm RQM}_{h0}$ be the homogeneous RQM finite element space defined on $\mathcal{T}_\omega$. Then $\dim(V^{\rm RQM}_{h0})=1$.
\end{lemma}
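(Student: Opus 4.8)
The plan is to transport the count from the fourth-order, piecewise-$P_2$ space $V^{\rm RQM}_{h0}$ to the first-order Park--Sheen setting through the $\curl$ operator, where the structure is transparent. The starting point is that $\curl_h\colon V^{\rm RQM}_{h0}\to\ruvps_{h0}$ is an isomorphism: surjectivity is exactly the identity $\curl V^{\rm RQM}_{h0}=\ruvps_{h0}$ established above, and injectivity is immediate because $\curl_h v_h=0$ forces $\nabla(v_h|_K)=0$, i.e.\ $v_h$ is constant on each cell; vertex continuity then propagates a single constant over the connected grid, and the homogeneous boundary condition makes that constant zero. Hence $\dim V^{\rm RQM}_{h0}=\dim\ruvps_{h0}$, and it suffices to count the discretely divergence-free Park--Sheen vector fields on $\mathcal{T}_\omega$.

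Next I would coordinatize $V^{\rm PS}_{h0}$ by edge averages. On the $3\times3$ grid there are $12$ interior and $12$ boundary edges; a Park--Sheen function is recorded by one scalar $c_e=\fint_e v$ per edge (automatically single-valued, and zero on the $12$ boundary edges), subject on each of the $9$ cells to the single relation that the two opposite-edge-average sums coincide, which is precisely the characterization from Section~\ref{sec:qgb} of when four edge averages come from a $P_1$ function. This gives a $9\times12$ system on the interior unknowns. I would then check that these nine cell relations have rank $8$, the unique dependency being the alternating (``checkerboard'') combination $\sum_{i,j}(-1)^{i+j}(\text{cell relation})_{ij}=0$ (with cells indexed by $(i,j)$, $1\le i,j\le 3$). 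Consequently $\dim V^{\rm PS}_{h0}=12-8=4$, with a convenient set of free parameters being the interior horizontal edge attached to each of the four corner cells.

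Finally, on $\uV{}^{\rm PS}_{h0}=(V^{\rm PS}_{h0})^2$, of dimension $8$, I would impose the nine per-cell constraints $\dv\uv{}_h|_K=0$. Since each component is $P_1$ on each cell, $\partial_x$ (resp.\ $\partial_y$) of it equals the difference of the right/left (resp.\ top/bottom) edge averages over the cell width (resp.\ height), so every constraint is one explicit linear relation among the eight parameters. Solving the four corner-cell equations first expresses the second component's parameters through those of the first; the four edge-cell equations then pin down the remaining relations; and the central cell's equation is found to be automatically satisfied. The residual freedom is a single parameter, so $\dim\ruvps_{h0}=1$ and therefore $\dim V^{\rm RQM}_{h0}=1$.

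The main obstacle is the exact rank bookkeeping in the last two steps. One must see that the Park--Sheen cell relations lose rank by precisely one (the checkerboard mode), and then that the nine divergence equations on the eight-dimensional space have rank exactly seven rather than eight: one drop is the always-present vanishing-mean relation $\sum_K|K|\dv\uv{}_h|_K=0$ (valid since the continuous, boundary-vanishing edge averages make all interior flux contributions cancel), a second structural dependency is the analogous checkerboard relation among the cell divergences, and the explicit elimination is what confirms the kernel is no larger than one-dimensional. I would also note that carrying the same computation out with general column widths and row heights shows these redundancies persist verbatim, so the dimension is robustly $1$ for any rectangular $3\times3$ subdivision, not merely the uniform one.
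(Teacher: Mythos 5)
Your route is genuinely different from the paper's. The paper never leaves the RQM degrees of freedom: it derives the per-cell compatibility condition \eqref{eq:cc} for a quadratic on a rectangle, writes the resulting $18$ equations in the $16$ unknowns (four interior vertex values, twelve interior edge averages of normal derivatives), and exhibits exactly three linear dependencies by explicit elimination. You instead transport the count through $\curl_h$ to $\ruvps_{h0}$, using the identity $\curl_h V^{\rm RQM}_{h0}=\ruvps_{h0}$ from Section~\ref{sec:min} (this is proved before the present lemma, so there is no circularity), and your injectivity argument for $\curl_h$ is correct. The first half of your count is also sound and geometry-independent: the nine Park--Sheen cell relations on the twelve interior edge averages have the checkerboard combination as their unique dependency (adjacent cells must carry opposite weights, and the dual graph of a rectangular grid is bipartite), so $\dim V^{\rm PS}_{h0}=12-8=4=\mathfrak{X}_I$. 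What your approach buys is a structural explanation --- the single degree of freedom is the single discretely divergence-free Park--Sheen field --- at the price of having to redo a rank computation in the transported setting.

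The genuine gap is in the last step, and it sits exactly where the content of the lemma lives. The lemma concerns an arbitrary $3\times3$ rectangular subdivision (the paper's proof carries general widths $\kappa,\sigma,\delta$ and heights $\alpha,\beta,\gamma$), and your claimed second dependency among the nine divergence constraints --- the unweighted checkerboard combination --- fails off the uniform grid. Indeed, $|K|\,\dv\uv{}_h|_K=\int_{\partial K}\uv{}_h\cdot\mathbf{n}$, so in $\sum_K(-1)^{i+j}|K|\,\dv\uv{}_h|_K$ each interior vertical edge contributes $2(-1)^{i+j}H_i\fint_e u$ (and each interior horizontal edge $-2(-1)^{i+j}L_j\fint_e v$); a short computation with the four-parameter description of $V^{\rm PS}_{h0}$ shows the vertical part equals $(H_1-H_2)(2a-b)+(H_3-H_2)(2d-e)$ in your corner-edge coordinates, which vanishes for all of $V^{\rm PS}_{h0}$ only when $H_1=H_2=H_3$. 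The rank is still $7$ in general, but the second dependency is a \emph{weighted} alternating combination whose weights are determined by the consistency conditions $(-1)^{i+1}c_j\bigl(1/H_i+1/H_{i+1}\bigr)=(-1)^{j}d_i\bigl(1/L_j+1/L_{j+1}\bigr)$, not the plain checkerboard; so your assertion that the redundancies ``persist verbatim'' is false, and the statement ``the explicit elimination confirms the kernel is one-dimensional'' defers precisely the computation that constitutes the proof. To close the argument you must carry out that elimination with general row heights and column widths (or identify the correct weighted dependency), after which your conclusion $\dim\ruvps_{h0}=1$, and hence $\dim V^{\rm RQM}_{h0}=1$, does follow.
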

\begin{proof}
We begin with the local construction of a quadratic polynomial versus a rectangle. Let $K$ be a rectangle with vertices $a_i$ and edges $\Gamma_i$, c.f. Figure \ref{fig:illus}, left. 
\begin{figure}[htbp]

\setlength{\unitlength}{1mm}

\begin{picture}(60,75)(-32,-5)

\thicklines\color{black}

\put(-60,0){\line(1,0){40}}
\put(-60,30){\line(1,0){40}}
\put(-60,0){\line(0,1){30}}
\put(-20,0){\line(0,1){30}}

\put(-60.8,-0.8){$\bullet$}
\put(-60.8,29.2){$\bullet$}
\put(-20.8,-0.8){$\bullet$}
\put(-20.8,29.2){$\bullet$}

\put(-59,1){$a_2$}
\put(-59,31){$a_1$}
\put(-19,1){$a_3$}
\put(-19,31){$a_4$}

\put(-60.3,15){$\fint\partial_x$}
\put(-20.3,15){$\fint\partial_x$}
\put(-40,0){$\fint\partial_y$}
\put(-40,30){$\fint\partial_y$}

\put(-59,10){$\Gamma_1$}
\put(-19,10){$\Gamma_3$}
\put(-30,1){$\Gamma_2$}
\put(-30,31){$\Gamma_4$}

\put(0,0){\line(1,0){60}}
\put(0,20){\line(1,0){60}}
\put(0,40){\line(1,0){60}}
\put(0,60){\line(1,0){60}}

\put(0,0){\line(0,1){60}}
\put(20,0){\line(0,1){60}}
\put(40,0){\line(0,1){60}}
\put(60,0){\line(0,1){60}}

\put(-6,10){$\gamma$}
\put(-6,30){$\beta$}
\put(-6,50){$\alpha$}
\put(10,63){$\kappa$}
\put(30,63){$\sigma$}
\put(50,63){$\delta$}

\put(19,8.5){$\bullet$}
\put(19,28.5){$\bullet$}
\put(19,48.5){$\bullet$}
\put(39,8.5){$\bullet$}
\put(39,28.5){$\bullet$}
\put(39,48.5){$\bullet$}

\put(22,10){$Z^3_1$}
\put(22,30){$Z^2_1$}
\put(22,50){$Z^1_1$}
\put(42,10){$Z^3_2$}
\put(42,30){$Z^2_2$}
\put(42,50){$Z^1_2$}

\put(9,19){$\bullet$}
\put(9,39){$\bullet$}
\put(29,19){$\bullet$}
\put(29,39){$\bullet$}
\put(49,19){$\bullet$}
\put(49,39){$\bullet$}

\put(12,21){$Y^2_1$}
\put(12,41){$Y^1_1$}
\put(32,21){$Y^2_2$}
\put(32,41){$Y^1_2$}
\put(52,21){$Y^2_3$}
\put(52,41){$Y^1_3$}

\put(19,19){$\bullet$}
\put(19,39){$\bullet$}
\put(39,19){$\bullet$}
\put(39,39){$\bullet$}

\put(21,21){$X^2_1$}
\put(21,41){$X^1_1$}
\put(41,21){$X^2_2$}
\put(41,41){$X^1_2$}

\put(63,9){$\mathcal{T}^3$}
\put(63,29){$\mathcal{T}^2$}
\put(63,49){$\mathcal{T}^1$}
\put(9,-6){$\mathcal{T}_1$}
\put(29,-6){$\mathcal{T}_2$}
\put(49,-6){$\mathcal{T}_3$}

\put(9,9){$K^3_1$}
\put(9,29){$K^2_1$}
\put(9,49){$K^1_1$}
\put(29,9){$K^3_2$}
\put(29,29){$K^2_2$}
\put(29,49){$K^1_2$}
\put(49,9){$K^3_3$}
\put(49,29){$K^2_3$}
\put(49,49){$K^1_3$}

\end{picture}
\vskip0mm
\caption{Left: Illustration of a rectangle $K$, with width $L$ and height $H$. Right:  Illustration of $\omega$ and $\mathcal{T}_\omega$: $X^i_j$ denotes the vertices, $Y^i_j$ and $Z^i_j$ denote the midpoints, and $K^i_j$ denotes the cells. The Greek letters denote the width (length) of the cells in the same column (row).}\label{fig:illus}
\end{figure}
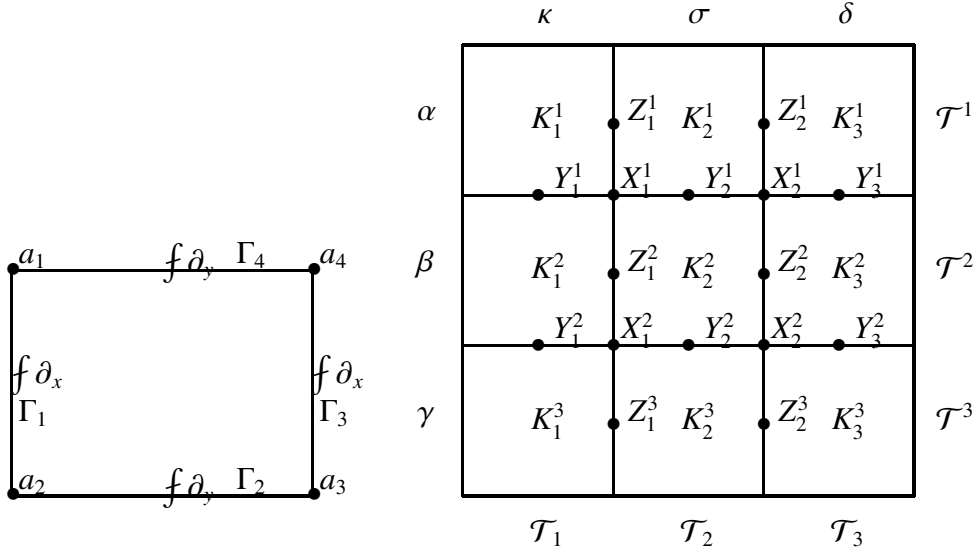
Then, given $\alpha_i,\beta_i\in\mathbb{R}$, $i=1:4$, there exists uniquely a $p\in P_2(K)$, such that 
$$
p(a_i)=\alpha_i,\ \fint_{\Gamma_1}\partial_xp=\beta_1,\ \fint_{\Gamma_2}\partial_yp=\beta_2,\ \fint_{\Gamma_3}\partial_xp=\beta_3,\ \mbox{and}\ \fint_{\Gamma_4}\partial_yp=\beta_4, 
$$ 
if and only if 
\begin{equation}\label{eq:cc}
\frac{\alpha_4-\alpha_1}{L}+\frac{\alpha_3-\alpha_2}{L}=\beta_1+\beta_3,\ \ \  \mbox{and}\ \ \ \frac{\alpha_4-\alpha_3}{H}+\frac{\alpha_1-\alpha_2}{L}=\beta_2+\beta_4.
\end{equation}
This way, under the compatible condition \eqref{eq:cc}, a quadratic polynomial is uniquely determined by its evaluation on vertices and derivative on edges. 

Now let the geometric features of $\omega$ and $\mathcal{T}_\omega$ be labelled as in Figure \ref{fig:illus}, right. Given $\varphi\in V^{\rm RQM}_{h0}(\mathcal{T}_\omega)$, denote by $x^i_j:=\varphi(X^i_j)$, $y^i_j:=\partial_y\varphi(Y^i_j)$ and $z^i_j:=\partial_x\varphi(Z^i_j)$. By the compatible condition \eqref{eq:cc} on every cell, we have, row by row,
\begin{eqnarray}
\frac{x^1_1}{\kappa}=z^1_1,\ -\frac{x^1_1}{\alpha}=y^1_1,\ \frac{x^1_2-x^1_1}{\sigma}=z^1_2+z^1_1,\ -\frac{x^1_2+x^1_1}{\alpha}=y^1_2,\ -\frac{x^1_2}{\alpha}=y^1_3,\ -\frac{x^1_2}{\delta}=z^1_2, 
\\ \frac{x^1_1+x^2_1}{\kappa}=z^2_1,\ \frac{x^1_1-x^2_1}{\beta}=y^1_1+y^2_1,\ \frac{x^1_1+x^1_2-x^2_1-x^2_2}{\beta}=y^1_2+y^2_2,
\\ \frac{x^1_2+x^2_2-x^1_1-x^2_1}{\sigma}=z^2_2+z^2_1, \
-\frac{x^1_2+x^2_2}{\delta}=z^2_2,\ \frac{x^1_2-x^2_2}{\beta}=y^1_3+y^2_3,
\\ \frac{x^2_1}{\kappa}=z^3_1,\ \frac{x^2_1}{\gamma}=y^2_1,\ \frac{x^2_1+x^2_2}{\gamma}=y^2_2,\quad  \frac{x^2_2-x^2_1}{\sigma}=z^3_2+z^3_1,\ -\frac{x^2_2}{\delta}=z^3_2,\quad \frac{x^2_2}{\gamma}=y^2_3.
\end{eqnarray}
We further rewrite the system equivalently to, after adjusting the order,
\begin{eqnarray}
x^1_1-\kappa z^1_1=0,\ x^1_2+\delta z^1_2=0, \ x^1_1+\alpha y^1_1=0,\ x^2_1-\gamma y^2_1=0
\\
x^1_2+\alpha y^1_3=0,\ x^2_2-\gamma y^2_3=0, \ x^2_1-\kappa z^3_1=0,\ x^2_2+\delta z^3_2=0
\end{eqnarray}
\begin{eqnarray}
x^1_1+x^1_2-x^2_1-x^2_2-\beta(y^1_2+y^2_2)=0 \label{eq:38}
\\
x^1_2+x^2_2-x^1_1-x^2_1-\sigma(z^2_2+z^2_1)=0.\label{eq:39}
\end{eqnarray}
\begin{eqnarray}
(1+\frac{\sigma}{\delta})x^1_2-(1+\frac{\sigma}{\kappa})x^1_1=0\label{eq:27}
\\ 
(1+\frac{\beta}{\alpha})x^1_1-(1+\frac{\beta}{\gamma})x^2_1=0\label{eq:29}
\\ 
(1+\frac{\beta}{\alpha})x^1_2-(1+\frac{\beta}{\gamma})x^2_2=0\label{eq:31}
\\
(1+\frac{\sigma}{\delta})x^2_2-(1+\frac{\sigma}{\kappa})x^2_1=0 \label{eq:33}
\end{eqnarray}
\begin{eqnarray}
x^1_2+x^1_1+\alpha y^1_2=0 \label{eq:34}
\\
x^1_1+x^2_1-\kappa z^2_1=0\label{eq:35}
\\
x^1_2+x^2_2+\delta z^2_2=0\label{eq:36}
\\
x^2_1+x^2_2-\gamma y^2_2=0\label{eq:37}
\end{eqnarray}
It is straightforward to verify that $\displaystyle\eqref{eq:38}= -\frac{\beta}{\alpha}\eqref{eq:34}+\frac{\beta}{\gamma}\eqref{eq:37}+\eqref{eq:29}+\eqref{eq:31}$, $\displaystyle\eqref{eq:39}= -\frac{\sigma}{\delta}\eqref{eq:36}+\frac{\sigma}{\kappa}\eqref{eq:35}+\eqref{eq:27}+\eqref{eq:33} $ and $\displaystyle\eqref{eq:31}= \frac{1+\beta/\alpha}{1+\sigma/\delta}\eqref{eq:27}+\frac{1+\sigma/\kappa}{1+\sigma/\delta}\eqref{eq:29}-\frac{1+\beta/\gamma}{1+\sigma/\delta}\eqref{eq:33}.$ Now, eliminate Equations \eqref{eq:31}, \eqref{eq:38} and \eqref{eq:39} from the system, and it is easy to see the remaining fifteen equations are linearly independent and the system admits a one-dimension solution space. This completes the proof. 
\end{proof}
\begin{remark}\label{rem:noicnowm}
If a subdivision $\mathcal{G}_\Omega$ of $\Omega$ does not have any interior cell, then $\dim(\vwmh(\mathcal{G}_\Omega))=0$.
\end{remark}

\begin{theorem}
Let $\mathcal{G}_h$ be a rectangular subdivision of $\Omega$. For the homogeneous RQM element space $\vwmh$ on $\Omega$. Then $\dim(\vwmh)=\#(\mathcal{K}^i_j(\mathcal{T}_h))$, the number of interior cells of the subdivision $\mathcal{T}_h$. 
\end{theorem}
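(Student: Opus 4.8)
The plan is to prove the identity by a global degree-of-freedom count that directly extends the explicit computation in the preceding $3\times 3$ lemma. A rectangular subdivision is a tensor grid; I write it with $m$ columns of widths $L_1,\dots,L_m$ and $n$ rows of heights $H_1,\dots,H_n$, so $\mathfrak{F}=mn$, and index the cells by $K_{i,j}$, the vertices by $(i,j)$, the vertical edges by the column lines and the horizontal edges by the row lines. The global nodal parameters of $v\in\vwmh$ are the vertex values $u_{k,l}$ at the interior vertices together with the normal-derivative averages $\fint_e\partial_{n_e}v$, which I call $p_{i,j}$ on interior vertical edges and $q_{i,j}$ on interior horizontal edges; all boundary parameters vanish. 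By the unisolvence relation \eqref{eq:cc}, prescribing these parameters on a cell determines the local quadratic precisely when the two compatibility equations hold there, and the cell polynomials then glue automatically, since the parameters are shared across edges and vertices. Hence $\vwmh$ is exactly the null space of the cell-wise compatibility system, so that
$$
\dim\vwmh=(\mathfrak{X}_I+\mathfrak{E}_I)-\operatorname{rank}(C),
$$
where $C$ collects the two conditions per cell: the horizontal one $\mathrm{(H_{i,j})}$, relating $u_{i-1,j-1},u_{i,j-1},u_{i-1,j},u_{i,j}$ and $p_{i-1,j},p_{i,j}$ through division by $L_i$, and the vertical one $\mathrm{(V_{i,j})}$, relating the same corners and $q_{i,j-1},q_{i,j}$ through division by $H_j$.

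Next I would record the elementary counts $\mathfrak{X}_I=(m-1)(n-1)$ and $\mathfrak{E}_I=2mn-m-n$, so $\mathfrak{X}_I+\mathfrak{E}_I=3mn-2m-2n+1$, together with $\#\mathcal{K}^i_h=(m-2)(n-2)$. Since $C$ has $2\mathfrak{F}=2mn$ rows, the theorem reduces to the single assertion that the rows of $C$ satisfy exactly three independent linear dependencies, i.e.\ $\operatorname{rank}(C)=2mn-3$; granting this,
$$
\dim\vwmh=(3mn-2m-2n+1)-(2mn-3)=(m-2)(n-2)=\#\mathcal{K}^i_h.
$$

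The core of the argument is to show that the space of dependencies is three-dimensional. I would write a general dependency as $\sum_{i,j}\lambda_{i,j}\mathrm{(H_{i,j})}+\sum_{i,j}\mu_{i,j}\mathrm{(V_{i,j})}\equiv0$, viewed as a functional on the interior parameters, and require the coefficient of each interior parameter to vanish. Vanishing of the coefficients of the interior edge parameters $p_{k,l}$ and $q_{k,l}$ forces the alternating (checkerboard) pattern $\lambda_{i,j}=(-1)^{i-1}\lambda_{1,j}$ and $\mu_{i,j}=(-1)^{j-1}\mu_{i,1}$, with no residual freedom beyond the choices of $\lambda_{1,\cdot}\in\mathbb{R}^n$ and $\mu_{\cdot,1}\in\mathbb{R}^m$. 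Substituting this pattern, the coefficient of an interior vertex parameter $u_{k,l}$ collapses to a scalar equation
$$
P_k\,S_l+T_k\,R_l=0,\qquad 1\le k\le m-1,\ 1\le l\le n-1,
$$
where $P_k=(-1)^{k-1}(L_k^{-1}+L_{k+1}^{-1})$ and $R_l=(-1)^{l-1}(H_l^{-1}+H_{l+1}^{-1})$ are fixed and nowhere zero, while $S_l=\lambda_{1,l}+\lambda_{1,l+1}$ and $T_k=\mu_{k,1}+\mu_{k+1,1}$. In matrix form this reads $P\,S^{\top}+T\,R^{\top}=0$, a rank-one identity; since $P,R$ are nonzero, its solutions are exactly $T=cP$, $S=-cR$ for a scalar $c$, so $(S,T)$ carries one degree of freedom. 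Recovering $\lambda_{1,\cdot}$ from $S$ and $\mu_{\cdot,1}$ from $T$ through the surjective adjacent-sum map $a\mapsto(a_l+a_{l+1})_l$, whose kernel is one-dimensional, adds one further degree of freedom each. Altogether the dependency space has dimension $1+1+1=3$.

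Assembling the arithmetic then yields $\dim\vwmh=(m-2)(n-2)=\#\mathcal{K}^i_h$, consistent with the two checkpoints already in the text: the $3\times 3$ grid, where the $18$ conditions carry exactly $3$ dependencies and the space is one-dimensional, and any grid without interior cells, where $\#\mathcal{K}^i_h=0$ recovers Remark \ref{rem:noicnowm}. The step I expect to be the main obstacle is precisely the dependency count of the previous paragraph: one must verify that the edge-coefficient equations pin down the sign pattern with no leftover freedom, and that the vertex-coefficient identity $P\,S^{\top}+T\,R^{\top}=0$ has exactly three independent solutions for every admissible (positive) spacing, so that no extra dependencies surface for special meshes such as the uniform one. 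Should the direct bookkeeping become unwieldy, an alternative is to invoke the isomorphism $\curl\colon\vwmh\to\ruvps_{h0}$, which is injective because a piecewise-constant function that is continuous at the vertices and vanishes on $\partial\Omega$ must be zero, and instead to compute $\dim\ruvps_{h0}$, which reduces to showing that the Park--Sheen divergence admits a single spurious pressure mode, the checkerboard.
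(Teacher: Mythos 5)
Your argument is correct for a genuine $m\times n$ tensor-product grid with $m,n\geqslant 2$, but it takes a genuinely different route from the paper. The paper's proof is constructive: it takes the one-dimensional space on each $3\times 3$ patch from the preceding lemma --- one locally supported function per interior cell --- and shows by a row-by-row sweeping/elimination argument that every $w\in\vwmh$ is a \emph{unique} linear combination of these patch functions, so that they form a basis; the dimension formula and the locally supported basis used for programming (see the remark following the theorem) come out together. You instead identify $\vwmh$ with the null space of the assembled compatibility system and compute ${\rm rank}(C)$ by determining the row-dependency space; your computation of that space (the checkerboard sign pattern forced by the edge coefficients, then the rank-one identity $P\,S^{\top}+T\,R^{\top}=0$ with $P,R$ componentwise nonzero, then one extra degree of freedom from each adjacent-sum kernel) is correct and gives exactly three dependencies, whence $\dim\vwmh=(3mn-2m-2n+1)-(2mn-3)=(m-2)(n-2)$. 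What your route buys is a self-contained global count that does not depend on a sweeping order and makes the ``three hidden constraints'' of the space explicit; what it does not deliver is the basis itself --- knowing $\dim\vwmh=(m-2)(n-2)$ does not show that the $(m-2)(n-2)$ patch functions are linearly independent, so the paper's practical conclusion still needs the sweeping argument or an equivalent. Two small caveats: the identities $\#\mathcal{K}^i_h=(m-2)(n-2)$ and ``exactly three dependencies'' both presuppose $m,n\geqslant 2$ (degenerate strips must be disposed of separately, as in Remark \ref{rem:noicnowm}), and your fallback via $\curl:\vwmh\to\ruvps_{h0}$ is viable in principle but merely shifts the burden to an independent dimension count for $\ruvps_{h0}$.
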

\begin{proof}
We prove the result by showing how a sweeping procedure works. Assume the domain is subdivided by $m\times n$ rectangles. In $y$ direction, let the grid $\mathcal{T}_\omega$ be decomposed to $m$ rows, each being $\mathcal{T}^i$, $1\leqslant i\leqslant m$, and in $x$ direction, let the grid be decomposed to $n$ columns, each being $\mathcal{T}_j$, $1\leqslant j\leqslant n$; see Figure \ref{fig:tran}. Label the vertices by $a_j^i$, $1\leqslant i\leqslant m$, $1\leqslant j\leqslant n$, and the cells by $T^i_j$. That is, $T^i_j=\mathcal{T}^i\cap \mathcal{T}_j$, and the vertices of $T^i_j$ are $a^{i-1}_{j-1}$, $a^{i-1}_{j}$, $a^{i}_{j-1}$ and $a^{i}_{j}$.
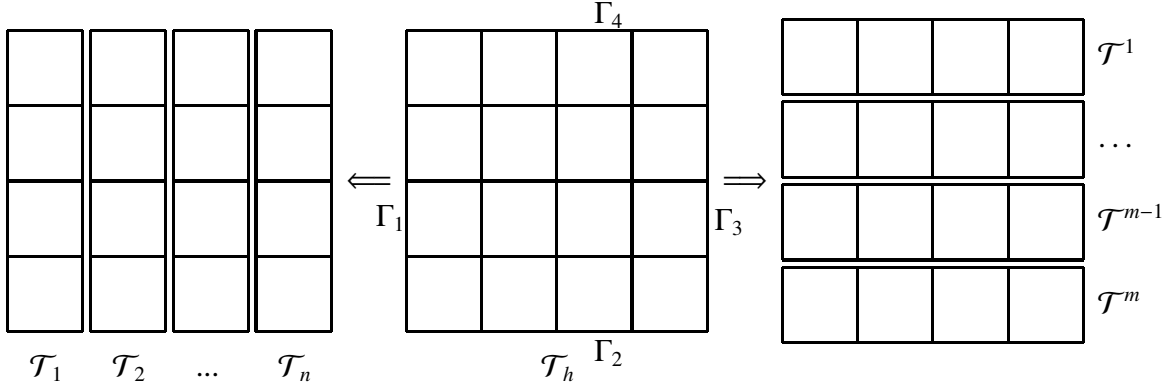
\begin{figure}[htbp]
\setlength{\unitlength}{1mm}

\begin{picture}(150,50)(-15,-5)
\thicklines\color{black}

\put(30,0){\line(0,1){40}}
\put(20,0){\line(0,1){40}}
\put(19,0){\line(0,1){40}}
\put(9,0){\line(0,1){40}}
\put(8,0){\line(0,1){40}}
\put(-2,0){\line(0,1){40}}
\put(-3,0){\line(0,1){40}}
\put(-13,0){\line(0,1){40}}
\put(-10,-6){$\mathcal{T}_1$}
\put(1,-6){$\mathcal{T}_2$}
\put(12,-6){$...$}
\put(23,-6){$\mathcal{T}_n$}

\put(-13,0){\line(1,0){10}}
\put(-2,0){\line(1,0){10}}
\put(9,0){\line(1,0){10}}
\put(20,0){\line(1,0){10}}

\put(-13,10){\line(1,0){10}}
\put(-2,10){\line(1,0){10}}
\put(9,10){\line(1,0){10}}
\put(20,10){\line(1,0){10}}

\put(-13,20){\line(1,0){10}}
\put(-2,20){\line(1,0){10}}
\put(9,20){\line(1,0){10}}
\put(20,20){\line(1,0){10}}

\put(-13,30){\line(1,0){10}}
\put(-2,30){\line(1,0){10}}
\put(9,30){\line(1,0){10}}
\put(20,30){\line(1,0){10}}

\put(-13,40){\line(1,0){10}}
\put(-2,40){\line(1,0){10}}
\put(9,40){\line(1,0){10}}
\put(20,40){\line(1,0){10}}

\put(32,19){$\Longleftarrow$}

\put(36,14){$\Gamma_1$}
\put(81,13.5){$\Gamma_3$}
\put(65,-4){$\Gamma_2$}
\put(65,41){$\Gamma_4$}
\put(40,0){\line(1,0){40}}
\put(40,10){\line(1,0){40}}
\put(40,20){\line(1,0){40}}
\put(40,30){\line(1,0){40}}
\put(40,40){\line(1,0){40}}
\put(40,0){\line(0,1){40}}
\put(50,0){\line(0,1){40}}
\put(60,0){\line(0,1){40}}
\put(70,0){\line(0,1){40}}
\put(80,0){\line(0,1){40}}

\put(58,-6){$\mathcal{T}_h$}

\put(82,19){$\Longrightarrow$}

\put(90,-1.5){\line(1,0){40}}
\put(90,8.5){\line(1,0){40}}
\put(90,9.5){\line(1,0){40}}
\put(90,19.5){\line(1,0){40}}
\put(90,20.5){\line(1,0){40}}
\put(90,30.5){\line(1,0){40}}
\put(90,31.5){\line(1,0){40}}
\put(90,41.5){\line(1,0){40}}

\put(132,2.5){$\mathcal{T}^m$}
\put(132,13.5){$\mathcal{T}^{m-1}$}
\put(132,24.5){$\dots$}
\put(132,35.5){$\mathcal{T}^1$}

\put(90,-1.5){\line(0,1){10}}
\put(90,9.5){\line(0,1){10}}
\put(90,20.5){\line(0,1){10}}
\put(90,31.5){\line(0,1){10}}

\put(90,-1.5){\line(0,1){10}}
\put(90,9.5){\line(0,1){10}}
\put(90,20.5){\line(0,1){10}}
\put(90,31.5){\line(0,1){10}}

\put(100,-1.5){\line(0,1){10}}
\put(100,9.5){\line(0,1){10}}
\put(100,20.5){\line(0,1){10}}
\put(100,31.5){\line(0,1){10}}

\put(110,-1.5){\line(0,1){10}}
\put(110,9.5){\line(0,1){10}}
\put(110,20.5){\line(0,1){10}}
\put(110,31.5){\line(0,1){10}}

\put(120,-1.5){\line(0,1){10}}
\put(120,9.5){\line(0,1){10}}
\put(120,20.5){\line(0,1){10}}
\put(120,31.5){\line(0,1){10}}

\put(130,-1.5){\line(0,1){10}}
\put(130,9.5){\line(0,1){10}}
\put(130,20.5){\line(0,1){10}}
\put(130,31.5){\line(0,1){10}}

\end{picture}

\caption{Illustration of the domain and the triangulation.} \label{fig:tran}
\end{figure}

The interior cells of $\omega$ are $T^i_j$ with $2\leqslant i\leqslant m-1$ and $2\leqslant j\leqslant n-1$. For any interior cell $T^i_j$, there is $3\times 3$ patch, labelled as $P^i_j$, with $T^i_j$ being its center cell. A homogeneous RQM element space $V^{\rm RQM}_{h0}(P^i_j)$ can be constructed on $P^i_j$.  Here, we do not distinct $V^{\rm RQM}_{h0}(P^i_j)$ and its extension onto the whole domain $\omega$ by zero. Now we carry out the sweeping procedure below.

Given $w\in V^{\rm RQM}_{h0}(\mathcal{T}_\omega)$, we begin with the first row $\mathcal{T}^1$ of the subdivision. Note that $T^2_2$ is the only interior cell whose patch contain $T^1_1$, and there exists a unique $\phi^2_2\in\vwmh(P^2_2)$, such that $\phi^2_2=w_h$ in $T^1_1$. Denote $w^1_1:=w-\phi^2_2$, and $w^1_1\in \vwmh(\mathcal{T}_\omega\setminus \{T^1_1\})$. Then, there exists a unique $\phi^2_3\in\vwmh(P^2_3)$, such that $\phi^2_3=w^1_1$ on $T^1_2$. Further, $w^1_2:=w^1_1-\phi^2_3\in\vwmh(\mathcal{T}-\{T^1_1,T^1_2\})$. By repeating the procedure along the first row of the subdivision, we will obtain a $w^1_{n-2}=w-\sum_{j=2}^{n-1}\phi^2_j$ with $\phi^2_j\in\vwmh(P^2_j)$, and $w^1_{n-2}\in \vwmh(\mathcal{T}_\omega\setminus\cup_{j=1}^n-2\{T^1_j\})$. By the compatible condition, it follows that $w^1_{n-2}=0$ on $T^1_{n-1}\cup T^1_n$, and $w^1_{n-2}\in\vwmh(\mathcal{T}_h\setminus \mathcal{T}^1)$.

Repeat the procedure along the rows $\mathcal{T}^i$, $i=2,\dots,m-2$, we can represent $w=w^{m-2}_{n-2}+\sum_{i=1}^{m-2}\sum_{j=2}^{n-1}\phi^{i+1}_j$, with $w^{m-2}_{n-2}\in \vwmh(\mathcal{T}^{m-1}\cup\mathcal{T}^m)$, and $\phi^i_j\in\vwmh(P^i_j)$. By the virtue of Remark \ref{rem:noicnowm}, it is easy to verify $w^{m-2}_{n-2}=0$. Namely $w=\sum_{i=1}^{m-2}\sum_{j=2}^{n-1}\phi^{i+1}_j$ with $\phi^i_j\in\vwmh(P^i_j)$. This way, any function in $\vwmh$ can be represented as a linear combination of basis functions of $\vwmh(P^i_j)$, $i=2,\dots,m-1$, $j=2,\dots,n-1$, uniquely. The proof is completed.
\end{proof}

\begin{remark}
The set of the homogeneous RQM element function constructed on each $3\times3$ form a basis of $\vwmh(\mathcal{G}_h)$, which is used for programming. 
\end{remark}
\begin{remark}
If $\Omega$ is a domain such that all sides are parallel axes, it can be subdivided to rectangular blocks, and the sweeping procedure can be run row by row and block by block. 
\end{remark}

\subsection{General implementation on quadrilateral subdivisions}

When the subdivision consist of arbitrary quadrilaterals, there may be too many patterns and we do not seek to construct clearly a set of basis functions of $V^{\rm RQM}_{h0}$. We instead present an approach how \eqref{eq:modeldis} can be implemented. 

We begin with the fact that $V^{\rm RQM}_{h0}=\{w_h\in V^{\rm QW}_{h0}:\int_e\frac{\partial w_h}{\partial n}=0,\ \forall\,e\in \mathcal{E}_h\}.$ Define $\mathcal{P}_0(\mathcal{E}_h)$ the space of piecewise constant functions defined on $\mathcal{E}_h$. An equivalent formulation of \eqref{eq:modeldis} is then to find $(u_h,\lambda_h)\in V^{\rm QW}_{h0}\times \mathcal{P}_0(\mathcal{E}_h)$, such that 
\begin{equation}
\label{eq:aug}
\left\{
\begin{array}{llll}
\displaystyle(\nabla_h^2u_h,\nabla_h^2v_h)&\displaystyle+\sum_{e\in\mathcal{E}_h}\fint_e\llbracket \frac{\partial v}{\partial n}\rrbracket\lambda_h&=&(f,v_h)
\\
\displaystyle\sum_{e\in\mathcal{E}_h}\fint_e\llbracket \frac{\partial u_h}{\partial n}\rrbracket\mu_h&&=&0.
\end{array}
\right.
\end{equation}
The lemma below is direct. 
\begin{lemma}
The problem \eqref{eq:aug} admits a solution $(u_h,\lambda_h)$, and $u_h\in V^{\rm RQM}_{h0}$ solves \eqref{eq:modeldis}. Moreover, if $(u_h,\lambda_h)$ and $(\hat u_h,\hat \lambda_h)$ are two solutions of \eqref{eq:aug}, then $u_h=\hat u_h$.
\end{lemma}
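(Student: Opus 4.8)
The plan is to establish the lemma in three parts, mirroring its three assertions: existence of a solution, the identification $u_h \in V^{\rm RQM}_{h0}$, and uniqueness of the primal component $u_h$. The key structural observation, already recorded in the excerpt, is that $V^{\rm RQM}_{h0}$ is precisely the subspace of $V^{\rm QW}_{h0}$ consisting of functions whose normal-derivative edge averages all vanish, so \eqref{eq:aug} is a saddle-point reformulation in which $\lambda_h \in \mathcal{P}_0(\mathcal{E}_h)$ plays the role of a Lagrange multiplier enforcing exactly these constraints.

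First I would treat the identification and existence together using the standard constrained-minimization/saddle-point structure. I would observe that the second equation of \eqref{eq:aug}, tested against every $\mu_h \in \mathcal{P}_0(\mathcal{E}_h)$, forces $\fint_e\llbracket \partial u_h/\partial n\rrbracket = 0$ for every $e\in\mathcal{E}_h$; combined with $u_h \in V^{\rm QW}_{h0}$, this is exactly the defining continuity condition of $V^{\rm RQM}_{h0}$, so any solution satisfies $u_h \in V^{\rm RQM}_{h0}$. Restricting the first equation of \eqref{eq:aug} to test functions $v_h \in V^{\rm RQM}_{h0}$ kills the multiplier term (since $\fint_e\llbracket \partial v_h/\partial n\rrbracket = 0$ there) and recovers \eqref{eq:modeldis} verbatim. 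Conversely, existence follows because \eqref{eq:aug} is the optimality system for minimizing $\tfrac12(\nabla_h^2 v_h,\nabla_h^2 v_h)-(f,v_h)$ over $V^{\rm QW}_{h0}$ subject to the linear edge constraints; since \eqref{eq:modeldis} is well-posed on the feasible set $V^{\rm RQM}_{h0}$ (as noted after \eqref{eq:modeldis}, via $V^{\rm RQM}_{h0}\subset V^{\rm QM}_{h0}$ and \cite{Park.C;Sheen.D2013}), a primal minimizer $u_h$ exists, and a compatible multiplier $\lambda_h$ exists because the constraint functionals are consistent (the residual functional $v_h\mapsto (\nabla_h^2 u_h,\nabla_h^2 v_h)-(f,v_h)$ vanishes on $V^{\rm RQM}_{h0}$, hence factors through the constraint map into a functional represented by some $\lambda_h$).

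For uniqueness of $u_h$, I would argue that if $(u_h,\lambda_h)$ and $(\hat u_h,\hat\lambda_h)$ both solve \eqref{eq:aug}, then both $u_h,\hat u_h\in V^{\rm RQM}_{h0}$ by the identification above, so their difference $u_h-\hat u_h\in V^{\rm RQM}_{h0}$ solves the homogeneous version of \eqref{eq:modeldis}; the well-posedness (coercivity of $(\nabla_h^2\cdot,\nabla_h^2\cdot)$ on $V^{\rm RQM}_{h0}\subset V^{\rm QM}_{h0}$) then forces $u_h=\hat u_h$. Note the statement deliberately claims uniqueness only for the primal variable, not for $\lambda_h$, which is consistent with the fact that the edge-constraint functionals need not be linearly independent (indeed the excerpt's dimension count shows $V^{\rm RQM}_{h0}$ is cut out by more constraints than strictly necessary), so the multiplier may live in an affine subspace.

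The main obstacle I anticipate is the clean production of the multiplier $\lambda_h$ in the existence argument: one must verify that the residual linear functional on $V^{\rm QW}_{h0}$ annihilates the kernel of the constraint map (i.e. vanishes on $V^{\rm RQM}_{h0}$), which is immediate, but then invoke the finite-dimensional closed-range/factorization fact to realize it as $v_h\mapsto \sum_{e}\fint_e\llbracket\partial v_h/\partial n\rrbracket\,\lambda_h$ for some $\lambda_h\in\mathcal{P}_0(\mathcal{E}_h)$. Because the constraints are generically redundant, there is no inf-sup stability for $\lambda_h$ and hence no uniqueness of the multiplier; the care lies in phrasing existence so it does not secretly presuppose that stability. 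Everything else is routine linear algebra once the saddle-point picture is in place.
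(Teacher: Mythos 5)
Your proposal is correct: the paper itself offers no argument (it simply declares the lemma ``direct''), and your saddle-point reading --- the second equation forces $u_h\in V^{\rm RQM}_{h0}$, testing the first equation with $v_h\in V^{\rm RQM}_{h0}$ recovers \eqref{eq:modeldis}, existence of $\lambda_h$ follows from the finite-dimensional fact that a functional vanishing on $\ker B$ lies in the range of $B^{*}$, and uniqueness of $u_h$ follows from the well-posedness of \eqref{eq:modeldis} --- is exactly the intended elementary argument. Your observation that the redundancy of the edge constraints precludes uniqueness of the multiplier is also consistent with the lemma claiming uniqueness only for $u_h$.
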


\begin{remark}
Actually, it holds that
$V^{\rm RQM}_{h0}=V^{\rm QW}_{h0}\cap V^{\rm QM}_{h0}$. 
\end{remark}

\section{Concluding remarks}
\label{sec:conc}

In this paper, we present an approach how a minimal-degree consistent finite element space can be constructed for biharmonic equation on quadrilateral grids. The finite element space is designed, and a practical approach how it can be implemented is also given.  Technically, the two main ingredients for the analysis are the exact relation between the space and a vector Park-Sheen element space, and a generally unstable $P_1-P_0$ pair for Stokes problem. We remark that, on rectangular grids, the role of the $P_1-P_0$ pair may be replaced by $Q_1-P_0$ pair; we refer to \cite{Pitkaranta.J;Stenberg.R1985} for related discussion. 

In this space, we focus ourselves on the homogeneous Dirichlet boundary value problem of the biharmonic equation. The homogeneous Navier type boundry value problem can be studied in future. As RQM element space is a subspace of rectangular Morley element space, and the Morley element space with Navier type boundry value condition can be used for Poisson equation with high accuracy of $\mathcal{O}(h^2)$ order on uniform subdivisions, the RQM element space can be expected to be an optimal quadratic element for Poisson equation there on. This will be studied in future. Also, problems of higher orders can be studied as well. 

When the grid is rectangular, an explicit set of locally-supported basis functions are given. For other grids, the finite element scheme is implemented in an indirect way, namely, rewritten in the formulation of \eqref{eq:aug}. The sufficiency of \eqref{eq:aug} for the primal formulation is obtained. In the future, the well-posed-ness of \eqref{eq:aug} could be studied. The construction of an explicit set of locally-supported basis functions on general quadrilateral grids will also be discussed. 

Finally, the RQM finite element space is defined in a way similar to spline but with less smoothness. We can treat this as some nonconforming spline function. This paper is focused on quadrilateral subdivisions, and its generalization to less regular grids, such as triangular grids can be of theoretical and practical meaning. This will be studied in future.


\end{document}